\newtheorem{theorem}{Theorem}[section]
\newtheorem{corollary}[theorem]{Corollary}
\newtheorem{lemma}[theorem]{Lemma}
\newtheorem{example}[theorem]{Example}
\numberwithin{equation}{section}
\begin{document}

\title{The Katok-Spatzier Conjecture and Generalized Symmetries}

\keywords{Anosov Actions, Generalized Symmetries, Equilibrium-free Flows}

\author{Lennard F. Bakker}

\email{\rm bakker@math.byu.edu}

\subjclass[2000]{Primary: 37C55, 37C85; Secondary: 11R99}

\begin{abstract} Within the smooth category, an intertwining is exhibited between the global rigidity of irreducible higher-rank ${\mathbb Z}^d$ Anosov actions on $\mathbb T^n$ and the classification of equilibrium-free flows on ${\mathbb T}^n$ that possess nontrivial generalized symmetries.
\end{abstract}

\maketitle

\markboth{\sc Spatzier's Conjecture and Generalized Symmetries}{\sc L.F. Bakker}

\section{Introduction}

In \cite{GO}, Spatzier communicated the conjecture that any irreducible higher-rank ${\mathbb Z}^d$ Anosov action is $C^\infty$-conjugate to an algebraic action. Later, in \cite{KS}, Kalinin and Spatzier stated a refinement of this conjecture that contends that any irreducible higher-rank ${\mathbb Z}^d$ Anosov action on any compact manifold has a finite cover $C^\infty$-conjugate to an algebraic action. The asserted global rigidity was motivated in part by earlier results of Katok and Lewis in \cite{KL1} and \cite{KL2},  and a more recent result by Rodriguez Hertz in \cite{RH}. In the latter, global rigidity has been shown for any higher-rank ${\mathbb Z}^d$ Anosov action on ${\mathbb T}^n$ whose action on homology has simple eigenvalues and whose course Lyapunov spaces are one or two dimensional, plus additional conditions. A partial confirmation of the refined assertion of global rigidity is provided in \cite{KS} for  higher-rank ${\mathbb Z}^d$ Anosov $C^\infty$ actions each of whose course Lyapunov spaces are one-dimensional, plus additional conditions. If a higher-rank ${\mathbb Z}^d$ Anosov action on ${\mathbb T}^n$ is $C^\infty$-conjugate to an algebraic one and that algebraic action has a  common real eigenvector, then that higher-rank ${\mathbb Z}^d$ Anosov action preserves a  one-dimensional $C^\infty$ foliation of ${\mathbb T}^n$ determined by that common real eigenvector, i.e., generated by a equilibrium-free $C^\infty$ flow.

This paper intertwines the global rigidity of  higher-rank ${\mathbb Z}^d$ Anosov $C^\infty$ actions on ${\mathbb T}^n$ with the classification of equilibrium-free $C^\infty$ flows on ${\mathbb T}^n$ that possess nontrivial generalized symmetries. The intertwining centers on presence of a single  one-dimensional $C^\infty$ distribution determined by an equilibrium-free $C^\infty$ flow that is invariant under a ${\mathbb Z}^d$ Anosov $C^\infty$ action, without a priori conditions on all the course Lyapunov spaces. As shown in Section 3, any generalized symmetry of an equilibrium-free flow is nontrivial if it is Anosov (see Corollary \ref{multiplierAnosov}). Furthermore, any equilibrium-free flow that possesses a nontrivial generalized symmetry does not have any uniformly hyperbolic compact invariant sets (see Corollary \ref{nonhyper}). The intertwining juxtaposes an equilibrium-free $C^\infty$ flow that is not Anosov with a smooth ${\mathbb Z}^d$ action that is Anosov. In the $C^\infty$ topology, this is a counterpoint to the result of Palis and Yoccoz in \cite{PY} on the triviality of centralizers for an open and dense subset of Anosov diffeomorphisms on ${\mathbb T}^n$, and also to the result of Sad in \cite{Sa} on the local triviality of centralizers for an open and dense subset of Axiom A vector fields that satisfy the strong transversality condition (as applied to vector fields on ${\mathbb T}^n$). In particular, it is quite rare for an equilibrium-free $C^\infty$ flow on ${\mathbb T}^n$ (or more generally, on a closed Riemannian manifold) to possess a nontrivial generalized symmetry (see Corollary \ref{rare}).

The first aspect of the intertwining on ${\mathbb  T}^n$ relates the global rigidity of a ${\mathbb Z}^d$ Anosov $C^\infty$ action with an equilibrium-free $C^\infty$ flow that is quasiperiodic. As detailed in Section 2, the generalized symmetry group of a $C^\infty$ flow $\Phi$ is the subgroup $S_\Phi$ of ${\rm Diff}^\infty({\mathbb T}^n)$ each of whose elements $R$ sends (via the pushforward) the generating vector field $X_\Phi$ of $\Phi$ to a uniform scalar multiple $\rho_\Phi(R)$ of itself. The multiplier group $M_\Phi$ of $\Phi$ is the abelian group of these scalars. As shown in Section 4, the elements of $M_\Phi\setminus\{1,-1\}$ when $\Phi$ is quasiperiodic (or more generally, minimal) are algebraic integers of degree between $2$ and $n$ inclusively (see Corollary \ref{algebraicnature}).

\begin{theorem}\label{Anosovalgebraic} On ${\mathbb T}^n$, suppose $\alpha$ is a\, ${\mathbb Z}^d$ Anosov $C^\infty$ action, and $\Phi$ is equilibrium-free $C^\infty$ flow. If $\alpha({\mathbb Z}^d)\subset S_\Phi$ and $\Phi$ is quasiperiodic $($i.e., $C^\infty$-conjugate to an irrational flow$)$, then $\alpha({\mathbb Z}^d)$ is $C^\infty$-conjugate to an affine action, a finite index subgroup of $\alpha({\mathbb Z}^d)$ is\, $C^\infty$-conjugate to an algebraic action, and $M_\Phi$ contains a ${\mathbb Z}^d$ subgroup.
\end{theorem}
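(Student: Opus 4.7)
My plan is to first normalize $\Phi$ via quasiperiodicity, then show each $\alpha(k)$ is affine through an averaging-and-density argument, next upgrade ``affine'' to ``algebraic'' on a finite-index subgroup by use of an Anosov element together with a cohomological reduction, and finally deduce that the multiplier map is injective on $\alpha(\mathbb{Z}^d)$ so that its image is the claimed $\mathbb{Z}^d$ subgroup of $M_\Phi$.

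Quasiperiodicity supplies $h \in \mathrm{Diff}^\infty(\mathbb{T}^n)$ conjugating $\Phi$ to an irrational translation flow with constant generator $v \in \mathbb{R}^n$ whose coordinates are $\mathbb{Q}$-linearly independent. Since every assertion is conjugation-invariant, I replace $\alpha$ by $h^{-1}\alpha h$, so that the hypothesis $\alpha(k) \in S_\Phi$ now reads $Df(x)\,v = \rho_\Phi(f)\,v$ for every $x \in \mathbb{T}^n$ and every $f = \alpha(k)$. Lifting $f$ to $\tilde f(x) = A_f x + p(x)$ with $A_f \in \mathrm{GL}(n,\mathbb{Z})$ the induced map on $H_1$ and $p \colon \mathbb{R}^n \to \mathbb{R}^n$ a $\mathbb{Z}^n$-periodic remainder, the identity becomes $A_f v + Dp(x)\,v = \rho_\Phi(f)\,v$. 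Integrating over $\mathbb{T}^n$ kills $Dp$ and yields $A_f v = \rho_\Phi(f)\,v$; the leftover $Dp(x)\,v \equiv 0$ says $p$ is invariant along orbits of the irrational flow in direction $v$, and so is globally constant by density of orbits and continuity of $p$. Hence $f$ is affine, which is the first conclusion.

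For the second conclusion, fix an Anosov element $f_0 = \alpha(k_0) \in \alpha(\mathbb{Z}^d)$. Its linear part $A_{f_0}$ is hyperbolic by Franks' theorem, so $I - A_{f_0}$ is invertible over $\mathbb{R}$; conjugating by the translation $x \mapsto x + (I - A_{f_0})^{-1} c_{f_0}$ arranges $f_0$ to be linear. For any other $g = \alpha(k)$ with $g(x) = A_g x + c_g$ in the new coordinates, commutation $g f_0 = f_0 g$ forces $(A_{f_0} - I)\,c_g \in \mathbb{Z}^n$, placing $c_g$ in the finite subgroup $F := \ker\bigl(A_{f_0} - I \colon \mathbb{T}^n \to \mathbb{T}^n\bigr)$ of order $|\det(A_{f_0} - I)|$. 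Since the $A_g$'s commute with $A_{f_0}$ they preserve $F$, so $g \mapsto c_g$ is a group cocycle on $\alpha(\mathbb{Z}^d)$ with values in the finite module $F$. The cohomology $H^1(\alpha(\mathbb{Z}^d), F)$ is a finite group, so the class of this cocycle is killed on some finite-index subgroup of $\alpha(\mathbb{Z}^d)$; a further translational conjugation sends $c_g$ to $0$ there, producing an algebraic action.

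Finally, the multiplier map $\rho_\Phi|_{\alpha(\mathbb{Z}^d)} \colon \alpha(\mathbb{Z}^d) \cong \mathbb{Z}^d \to M_\Phi$ has image a $\mathbb{Z}^d$ subgroup of $M_\Phi$ exactly when it is injective. If $\rho_\Phi(\alpha(k)) = 1$ with $k \neq 0$, then after restriction to the algebraic finite-index subgroup just constructed, $\alpha(k)(x) = A_k x$ satisfies $A_k v = v$ while commuting with the hyperbolic $A_{f_0}$. The desired contradiction should follow from the faithfulness of the $\mathbb{Z}^d$ linear action together with the algebraic structure of the commutant of $A_{f_0}$ in $\mathrm{GL}(n,\mathbb{Z})$: a nontrivial integer matrix commuting with a hyperbolic $A_{f_0}$ and fixing the irrational direction $\mathbb{R} v$ forces, via Dirichlet-unit-type considerations on the number field $\mathbb{Q}(\rho_\Phi(f_0))$, that $A_k = I$ and so $k = 0$. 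I expect this last injectivity step to be the main obstacle: the first three steps run on the clean averaging-and-density principle and a routine cocycle reduction, whereas ruling out a nontrivial $A_k$ that commutes with $A_{f_0}$ and fixes $\mathbb{R} v$ requires genuine use of the algebraic structure of hyperbolic commutants in $\mathrm{GL}(n,\mathbb{Z})$.
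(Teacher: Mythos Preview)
Your approach is essentially correct and runs parallel to the paper's, but you have misidentified where the difficulty lies. The step you flag as the ``main obstacle'' is in fact the easiest: if $A_k\in\mathrm{GL}(n,\mathbb{Z})$ satisfies $A_kv=v$ and the components of $v$ are linearly independent over $\mathbb{Q}$, then each row of $A_k-I$ gives an integer linear relation among the $v_j$, forcing $A_k=I$. No commutation with $A_{f_0}$ and no Dirichlet-unit considerations are needed. From $A_k=I$ you get that $\alpha(k)$ is a translation; commutation with the linear hyperbolic $f_0$ puts $c_k$ in the finite group $F$, so $\alpha(k)$ has finite order in the torsion-free group $\alpha(\mathbb{Z}^d)$, whence $k=0$. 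The paper sidesteps this entirely by invoking the semidirect product structure $S_\phi=\mathrm{Trans}(\mathbb{T}^n)\rtimes_\Gamma H_\phi$ with $H_\phi\cong M_\phi$ (Theorem~\ref{characterization}): once the finite-index subgroup is conjugated into $H_\phi$, the isomorphism $\rho_\phi|_{H_\phi}$ carries its $\mathbb{Z}^d$ image directly into $M_\phi$.

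Two smaller remarks. First, your affine step is exactly the content the paper packages into its structure theorem, so the arguments agree. Second, your reduction to an algebraic action on a finite-index subgroup has a minor imprecision: finiteness of $H^1(\mathbb{Z}^d,F)$ does not by itself yield a finite-index subgroup on which the class restricts to zero. The correct (and simpler) argument---which is what the paper does in different language via a common fixed point---is that $\alpha(\mathbb{Z}^d)$ acts by affine maps on the finite set $F$, and the stabilizer of $0$ is precisely $\{g:c_g=0\}$, automatically of finite index. No further translational conjugation is then required.
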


\noindent Relevant definitions and the proof are given in Section 6. The proof holds not only for $d\geq 2$ but also for $d=1$. It uses a semidirect product characterization of the structure of the generalized symmetry group for an irrational flow (as shown in Section 5). It also uses the existence of a common fixed point for a finite index subgroup of the ${\mathbb Z}^d$ Anosov action, a device used in other global rigidity results (for example, see \cite{KL2}).

The second aspect of the intertwining on ${\mathbb T}^n$ relates the classification of an equilibrium-free $C^\infty$ flow with a ${\mathbb Z}^d$ Anosov $C^\infty$ action that is topologically irreducible. As detailed in Section 7, an irrational flow $\phi$ on ${\mathbb T}^n$ is of Koch type if a uniform scalar multiple of its frequencies form a ${\mathbb Q}$-basis for a real algebraic number field of degree $n$ (also see \cite{KO} and \cite{LD}). For an $R\in S_\Phi$ of an equilibrium-free $C^\infty$ flow $\Phi$, the quantity $\log\vert \rho_\Phi(R)\vert$ is the value of the Lyapunov exponent $\chi_R$ of $R$ in the direction of $X_\Phi$ (see Theorem \ref{Lyapunov}).

\begin{theorem}\label{flowKoch} On ${\mathbb T}^n$, suppose $\alpha$ is a higher-rank\, ${\mathbb Z}^d$ Anosov $C^\infty$ action, and $\Phi$ is equilibrium-free $C^\infty$ flow. If $\alpha({\mathbb Z}^d)\subset S_\Phi$, and $\alpha$ is topologically irreducible and $C^\infty$-conjugate to an algebraic ${\mathbb Z}^d$ action, and for an Anosov element $R\in{\alpha({\mathbb Z}^d})$, the multiplicity of the value $\log\vert \rho_\Phi(R)\vert$ of $\chi_R$ is one at some point of\, ${\mathbb T}^n$, then $\Phi$ is  projectively $C^\infty$-conjugate to an irrational flow of Koch type.
\end{theorem}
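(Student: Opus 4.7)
The plan is to transfer the problem to the algebraic model via the conjugacy $h$, use the multiplicity-one Lyapunov hypothesis to confine $X_\Phi$ to a $1$-dimensional real eigenline of the algebraic action, upgrade ``confined direction'' to ``constant direction'' through Anosov ergodicity, and finally read off the Koch property from the algebraic structure. Let $h\in\mathrm{Diff}^\infty(\mathbb{T}^n)$ conjugate $\alpha$ to an algebraic action $\beta$, with $\beta(k)$ induced by $A_k\in GL(n,\mathbb{Z})$; set $B:=A_R$ and $Y:=h_*X_\Phi$. The inclusion $\alpha(\mathbb{Z}^d)\subset S_\Phi$ transports to $(\beta(k))_*Y=\rho_\Phi(\alpha(k))\,Y$, and Theorem \ref{Lyapunov} identifies the Lyapunov exponent of $B$ along $Y$ with the constant $\log|\rho_\Phi(R)|$. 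Because $B$ is linear, the Oseledec splitting of $T_x\mathbb{T}^n\equiv\mathbb{R}^n$ is the constant decomposition of $\mathbb{R}^n$ into generalized eigenspaces of $B$ grouped by $\log|\lambda|$; the multiplicity-one assumption then isolates a unique real eigenvalue $\lambda$ with $|\lambda|=|\rho_\Phi(R)|$, a $1$-dimensional eigenspace $E:=\ker(B-\lambda I)$, and the pointwise containment $Y(x)\in E$ for all $x$. Since every $A_k$ commutes with $B$ and preserves its Lyapunov filtration, $E$ is a common eigenline for $\beta(\mathbb{Z}^d)$, with real characters $\lambda_k$.

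Now choose $v\in E\setminus\{0\}$ and write $Y(x)=f(x)\,v$ for a smooth nowhere-vanishing $f$. The equivariance reduces to $\lambda_k f(A_k^{-1}x)=\rho_\Phi(\alpha(k))\,f(x)$; integrating against the $A_k$-invariant Lebesgue measure and using that $f$ has constant sign forces $\lambda_k=\rho_\Phi(\alpha(k))$, and hence $f\circ A_k^{-1}=f$ for every $k$. Specializing $k=R$ makes $f$ invariant under the ergodic Anosov automorphism $B$, so $f$ is constant. Therefore $Y\equiv f_0 v$ is a constant vector field, and $h$ conjugates $\Phi$, up to the constant time-rescaling by $f_0$, to the linear flow on $\mathbb{T}^n$ with frequency vector $v$---which is precisely projective $C^\infty$-conjugacy.

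To identify this linear flow as irrational and of Koch type, observe first that the smallest rational subspace of $\mathbb{R}^n$ whose real extension contains $v$ is $\beta(\mathbb{Z}^d)$-invariant and nonzero, hence equal to $\mathbb{R}^n$ by topological irreducibility; thus the coordinates of $v$ are $\mathbb{Q}$-linearly independent. Setting $V_\mathbb{Q}:=\sum_i\mathbb{Q}v_i$ and $F:=\mathbb{Q}(\lambda_k:k\in\mathbb{Z}^d)$, the relations $A_kv=\lambda_kv$ give $\lambda_k V_\mathbb{Q}\subseteq V_\mathbb{Q}$, so $V_\mathbb{Q}$ is an $F$-subspace of $\mathbb{R}$ of $\mathbb{Q}$-dimension $n$. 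The higher-rank topologically irreducible hypothesis on $\beta$ forces $[F:\mathbb{Q}]=n$---via the standard structure theorem identifying such actions, up to finite index, with actions of unit groups on orders in real degree-$n$ number fields---so $V_\mathbb{Q}$ is a $1$-dimensional $F$-subspace of $\mathbb{R}$. Rescaling $v$ by a real constant (absorbed into the projective equivalence) identifies $V_\mathbb{Q}$ with $F\subset\mathbb{R}$, exhibiting $(v_1,\ldots,v_n)$ as a $\mathbb{Q}$-basis for the degree-$n$ real number field $F$, i.e., as a Koch frequency vector.

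The principal obstacle is the final step $[F:\mathbb{Q}]=n$. Working with $B$ alone, a $\ker p(B)$ argument together with topological irreducibility shows only that the minimal polynomial of $B$ equals the minimal polynomial $p$ of $\lambda$ over $\mathbb{Q}$, yielding $[\mathbb{Q}(\lambda):\mathbb{Q}]\mid n$ rather than equality; the full higher-rank hypothesis on $\alpha$ must be invoked substantively to close this gap, while the earlier steps use only the existence of the single Anosov element $R$.
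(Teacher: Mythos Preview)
Your overall architecture matches the paper's: pass to the algebraic model via $h$, use the multiplicity-one hypothesis to trap $Y=h_*X_\Phi$ in a one-dimensional $B$-eigenline $E$, argue that $Y$ is constant there, and then read off the Koch property from irreducibility.

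The constancy step is handled differently. You write $Y=f\cdot v$, integrate $\lambda_k\, f\circ A_k^{-1}=\rho_\Phi(\alpha(k))\,f$ against Lebesgue to obtain $\lambda_k=\rho_\Phi(\alpha(k))$ (using $\int f\ne 0$ since the nowhere-vanishing continuous $f$ has constant sign), and then conclude that $f$ is $B$-invariant and hence constant by ergodicity of the hyperbolic toral automorphism $B$. The paper instead chooses $\bar\theta$ with dense $B$-orbit and verifies directly that $\Vert X_\psi(B^k\bar\theta)\Vert=\Vert X_\psi(\bar\theta)\Vert$ for all $k$, whence $\Vert X_\psi\Vert$ is constant by continuity and $X_\psi$ is constant since it lives in a line. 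Both arguments are valid; yours is tidy and has the pleasant side effect of identifying the eigenvalue of each $A_k$ on $E$ with the corresponding multiplier.

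The genuine gap is the Koch step, as you yourself flag. Setting $V_{\mathbb Q}=\sum_i{\mathbb Q}v_i$ and $F={\mathbb Q}(\lambda_k:k\in{\mathbb Z}^d)$ and observing that $V_{\mathbb Q}$ is an $F$-module of ${\mathbb Q}$-dimension $n$ is correct, but the assertion $[F:{\mathbb Q}]=n$ ``via the standard structure theorem'' is not a proof---and the structure theorem you allude to is close to what is being proved. The paper closes this concretely, and this is exactly where the higher-rank hypothesis enters: topological irreducibility of $\alpha$ passes to algebraic irreducibility of the conjugate action, and by Proposition~3.1 of \cite{KKS} a \emph{higher-rank} algebraically irreducible ${\mathbb Z}^d$-action contains some $B'\in\Delta_{h^{-1}}(\alpha({\mathbb Z}^d))$ with irreducible characteristic polynomial over ${\mathbb Q}$. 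Since $B'$ commutes with $B$, the line $E$ is also a $B'$-eigenline, and then results of Wallace \cite{WA} produce $\vartheta\in{\mathbb R}^\times$ so that the components of $\vartheta^{-1}v$ form a ${\mathbb Q}$-basis of the degree-$n$ field generated by the corresponding eigenvalue. In your framework the same $B'$ immediately gives $[{\mathbb Q}(\lambda_{B'}):{\mathbb Q}]=n$ with $\lambda_{B'}\in F$, forcing $[F:{\mathbb Q}]=n$ and finishing your argument; but as written, that step is missing.
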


\noindent Relevant definitions and the proof are given in Section 7. The proof uses the Oseledets decomposition for an Anosov diffeomorphism (see \cite{BP} and \cite{KH}) to show that the flow is $C^\infty$-conjugate to one generated by a constant vector field. Then by the topological irreducibility and results of Wallace in \cite{WA}, the components of a scalar multiple of the constant vector field are shown to form a ${\mathbb Q}$-basis for a real algebraic number field.

\section{Flows with Nontrivial Generalized Symmetries}

Generalized symmetries extend the classical notions of time-preserving and time-reversing symmetries  of flows. To simplify notations for these and for proofs of results, it is assumed throughout the remainder of the paper that all manifolds, flows, vector fields, diffeomorphisms, distributions, etc., are smooth, i.e., of class $C^\infty$. Let $P$ be a closed (i.e., compact without bounday) manifold. Let ${\rm Flow}(P)$ denote the set of flows on $P$. Following \cite{BC}, a {\it generalized symmetry}\, of $\psi\in{\rm Flow}(P)$ is an $R\in{\rm Diff}(P)$ such that there is $\mu\in{\mathbb R}^\times = {\mathbb R}\setminus\{0\}$ (the multiplicative real group) for which
\[ R\psi(t,p) = \psi(\mu t, R(p)){\rm\  for\ all\ } t\in{\mathbb R}{\rm\ and\ all\ } p\in P.\]
It is easy to show that $R$ being a generalized symmetry of $\psi$ is equivalent to $R$ satisfying 
\[ R_*X_\psi = \mu X_\psi{\rm\ for\ some\ }\mu\in{\mathbb R}^\times.\]
Here $X_\psi(p)=(d/dt)\psi(t,p)\vert_{t=0}$ is the vector field that generates $\psi$, and $R_*X_\psi = {\bf T}R X_\psi R^{-1}$ is the push-forward of $X_\psi$ by $R$ where ${\bf T}R$ is the derivative map. The {\it generalized symmetry group}\, of $\psi$ is the set $S_\psi$ of all the generalized symmetries that $\psi$ possesses. There is a homomorphism $\rho_\psi:S_\psi \to {\mathbb R}^\times$ taking $R\in S_\psi$ to its unique multiplier $\rho_\psi(R) = \mu$. The multiplier group of $\psi$ is $M_\psi = \rho_\psi(S_\psi)$.

The generalized symmetry group and the multiplier group of a flow are invariants for the equivalence relation of projective conjugacy. Two $\psi,\phi\in{\rm Flow}(P)$ are {\it projectively conjugate}\, if there are $h\in{\rm Diff}(P)$ and $\vartheta\in{\mathbb R}^\times$ such that $h_*X_\psi = \vartheta X_\phi$. Projective conjugacy is an equivalence relation on ${\rm Flow}(P)$. Projective conjugacy reduces to smooth conjugacy when $\vartheta = 1$. For $h\in{\rm Diff}(P)$, let $\Delta_h$ be the inner automorphism of ${\rm Diff}(P)$ given by $\Delta_h(R) = h^{-1}Rh$ for $R\in{\rm Diff}(P)$. If $h_*X_\psi = \vartheta X_\phi$, then $\Delta_h(S_\phi) = S_\psi$ (see Theorem 4.1 in \cite{BC} which states that $S_\psi$ is conjugate to the generalized symmetry group for the flow determined by $\vartheta X_\phi$, which is exactly the same as $S_\phi$.) Furthermore, if $\psi$ and $\phi$ are projectively conjugate, then $M_\psi=M_\phi$ (see Theorem 2.2 in \cite{BA5}), i.e., the multiplier group is an absolute invariant of projective conjugacy.

Any $R\in S_\psi$ is a trivial generalized symmetry of $\psi$ if  $\rho_\psi(R)=1$ (i.e., $R$ is time-preserving), or if $\rho_\psi(R)=-1$ (i.e., $R$ is time-reversing). Any $R\in S_\psi$ with $\vert\rho_\psi(R)\vert\ne 1$ is a {\it nontrivial generalized symmetry}\, of $\psi$. A flow $\psi$ (or equivalently its generating vector field $X_\psi$) is said to possess a nontrivial generalized symmetry when $M_\psi\setminus\{1,-1\}\ne\emptyset$. 

\begin{theorem}\label{noperiodic} Let $\psi$ be a flow on a closed Riemannian manifold $P$. If $\psi$ has a periodic orbit and $M_\psi\setminus\{1,-1\}\ne\emptyset$, then $\psi$ has a nonhyperbolic equilibrium.
\end{theorem}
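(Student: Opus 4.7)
The plan is to use a nontrivial generalized symmetry $R\in S_\psi$ to produce a sequence of $\psi$-periodic orbits whose minimal periods shrink to zero, and then to force their accumulation point to be a nonhyperbolic equilibrium. Let $\mu=\rho_\psi(R)$ with $|\mu|\ne 1$; after replacing $R$ by $R^{-1}$ (which lies in $S_\psi$ with multiplier $\mu^{-1}$), I may assume $|\mu|<1$. Fix a point $p$ on a $\psi$-periodic orbit of minimal period $T>0$.

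First I would establish a period-scaling lemma: if $q$ lies on a $\psi$-periodic orbit of minimal period $\tau>0$, then $R(q)$ lies on a $\psi$-periodic orbit of minimal period $|\mu|\tau$. This follows from the defining identity $R\psi(t,q)=\psi(\mu t,R(q))$, which shows at once that $|\mu|\tau$ is a period of $R(q)$; any strictly smaller positive period of $R(q)$ would, by applying $R^{-1}$, pull back to a period of $q$ strictly less than $\tau$, contradicting minimality. Iterating, $R^n(p)$ lies on a $\psi$-periodic orbit of minimal period $|\mu|^n T$, which tends to $0$ as $n\to\infty$.

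Next, by compactness of $P$, I would extract a subsequence $R^{n_k}(p)\to q\in P$. From $\psi(|\mu|^{n_k}T,R^{n_k}(p))=R^{n_k}(p)$, a first-order Taylor expansion in $t$ yields $X_\psi(R^{n_k}(p))=O(|\mu|^{n_k}T)\to 0$, so by continuity $X_\psi(q)=0$ and $q$ is an equilibrium. Moreover, because $\|X_\psi\|_\infty<\infty$ on the compact manifold $P$, the entire orbit through $R^{n_k}(p)$ is contained in the ball of radius $|\mu|^{n_k}T\cdot\|X_\psi\|_\infty$ about $R^{n_k}(p)$, so any preassigned neighborhood of $q$ contains the whole of a nontrivial $\psi$-periodic orbit for all sufficiently large $k$.

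Finally, suppose for contradiction that $q$ were a hyperbolic equilibrium. The Hartman--Grobman theorem provides a neighborhood $U$ of $q$ on which $\psi$ is topologically conjugate to the linear flow generated by $DX_\psi(q)$; since that matrix has no purely imaginary eigenvalues, its linear flow has no nontrivial closed orbits, and topological conjugacy transfers this property to $\psi\vert_U$. This contradicts the presence of entire nontrivial periodic orbits of $\psi$ inside $U$ for large $k$, so $q$ must be a nonhyperbolic equilibrium. The main obstacle is the period-scaling lemma together with ensuring that the shrinking periodic orbits actually fit inside a Hartman--Grobman neighborhood, and both points rely crucially on the Lipschitz control coming from compactness of $P$.
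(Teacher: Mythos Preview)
Your argument is correct and follows essentially the same route as the paper's proof: pass to $|\mu|<1$, establish that the minimal period scales by $|\mu|$ under $R$, iterate to get periodic orbits of vanishing period and hence vanishing diameter, accumulate on a point, and rule out hyperbolicity via Hartman--Grobman. The only cosmetic difference is that you deduce $X_\psi(q)=0$ from a first-order Taylor estimate, while the paper reaches the same conclusion by invoking the Flow Box Theorem; both are valid and the remaining steps coincide.
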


\begin{proof} Suppose for $p_0\in P$ that ${\mathcal O}_\psi(p_0)=\{\psi_t(p_0):t\in{\mathbb R}\}$ is a periodic orbit whose fundamental period is $T_0>0$. (Here $\psi_t(p) = \psi(t,p)$.) Assuming that $M_\phi\setminus\{1,-1\}\ne\emptyset$ implies there is $Q\in S_\psi$ such that $\rho_\psi(Q)\ne \pm 1$. If $\vert \rho_\psi(Q)\vert >1$, then $\vert\rho_\psi(Q^{-1})\vert<1$, since $\rho_\psi$ is a homomorphism. Hence there is $R\in S_\psi$ such that $\vert\rho_\psi(R)\vert<1$. Then 
\[ R(p_0) = R\psi(T,p_0) = \psi\big(\rho_\psi(R)T_0,R(p_0)\big).\]
For $p_1= R(p_0)$, this implies that ${\mathcal O}_\psi(p_1)$ is a periodic orbit with period $T_1 = \vert \rho_\psi(R)\vert T_0$. Suppose that $T_1$ is not a fundamental period for ${\mathcal O}_\psi(p_1)$, i.e., there is $0<T_1^\prime<T_1$ such that $p_1 = \psi(T_1^\prime,p_1)$. This implies that
\[ R(p_0) = p_1 = \psi(T_1^\prime,p_1) = \psi\big(T_1^\prime,R(p_0)\big) = R\psi(T_1^\prime/\rho_\psi(R),p_0).\]
Since $R$ is invertible, this gives $p_0 = \psi(T_1^\prime/\rho_\psi(R), p_0)$. With $T_0$ as the fundamental period for ${\mathcal O}_\psi(p_0)$, there is $m\in{\mathbb Z}^+$ such that $\vert T_1^\prime/\rho_\psi(R)\vert = mT_0$. Hence
\[ T_1^\prime = m T_0 \vert \rho_\psi(R)\vert \geq T_0\vert \rho_\psi(R)\vert = T_1,\]
a contradiction to $T_1^\prime < T_1$. Thus, $T_1$ is the fundamental period for ${\mathcal O}_\psi(p_1)$. Since $\vert \rho_\psi(R)\vert<1$, it follows that ${\mathcal O}_\psi(p_0)$ and ${\mathcal O}_\psi(p_1)$ have different fundamental periods, and hence are distinct periodic orbits. Iteration gives a sequence of distinct periodic orbits ${\mathcal O}_\psi(p_i)$, where $p_i = R^i(p_0)$, whose fundamental periods $T_i = \vert \rho_\psi(R)\vert^i T_0$ decrease to $0$ since $\vert \rho_\psi(R)\vert <1$.

Let $l(p_i)$ be the arc length of ${\mathcal O}_\psi(p_i)$. In terms of the Riemannian norm $\Vert\cdot\Vert$ on ${\bf T}P$, the tangent bundle of $P$,
\[ l(p_i) = \int_0^{T_i} \Vert X_\psi\big(\psi(t,p_i)\big)\Vert \ dt.\]
Continuity of $X_\psi$ on the compact manifold $P$ implies that $M=\sup_{p\in P} \Vert X_\psi(p)\Vert$ is finite. Thus 
\[ l(p_i)\leq MT_i \to 0.\]
By the compactness of $P$, there is a convergence subsequence $p_{i_m}$ with limit $p_\infty$. If $\Vert X_\psi(p_\infty)\Vert\ne 0$, then the smoothness of $X_\psi$ implies by the Flow Box Theorem that there are no periodic orbits of $\psi$ completely contained in a sufficiently small neighborhood $U$ of $p_\infty$. But since $l(p_{i_m})\to0$ and $p_{i_m}\to p_\infty$, there are periodic orbits of $\psi$ completely contained in $U$, a contradiction. Thus $p_\infty$ is an equilibrium for $\psi$. If $p_\infty$ were hyperbolic, then the Hartman-Grobman Theorem would imply that there are no periodic orbits completely contained in a sufficiently small neighborhood of $p_\infty$, again a contradiction. Therefore, $\phi$ has a nonhyperbolic equilibrium.
\end{proof}

The flows on a closed Riemannian manifold which possess nontrivial generalized symmetries are rare or non-generic in the sense of Baire category theory. Let ${\mathcal X}(P)$ denote the set of vector fields on $P$. With $P$ compact, there is a one-to-one correspondence between ${\mathcal X}(P)$ and ${\rm Flow}(P)$. Equipped with the usual $C^\infty$ topology, ${\mathcal X}(P)$ is a Baire space. A residual subset of ${\mathcal X}(P)$ is one that contains a countable intersection of open, dense subsets of ${\mathcal X}(P)$. Because ${\mathcal X}(P)$ is a Baire space, any residual subset is dense. 

\begin{corollary}\label{rare} For a closed Riemannian manifold $P$, the set of vector fields on $P$ which do not possess nontrivial generalized symmetries is a residual subset of ${\mathcal X}(P)$.
\end{corollary}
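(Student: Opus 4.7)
My plan is to show that $\mathcal{X}(P)\setminus N$, where $N$ denotes the set of $X\in\mathcal{X}(P)$ whose generated flow $\psi$ satisfies $M_\psi\setminus\{1,-1\}\ne\emptyset$, is residual by exhibiting it as a superset of an open dense subset. Two open subsets of $\mathcal{X}(P)$ are central: $\mathcal{U}$, the set of $X$ whose equilibria are all hyperbolic, and $\mathcal{V}$, the set of $X$ whose flow has at least one hyperbolic periodic orbit of positive period. Openness of $\mathcal{U}$ follows from persistence of hyperbolic equilibria via the implicit function theorem, together with the fact that no new equilibria appear under small perturbation on a uniform neighborhood of a point where $X\ne 0$; openness of $\mathcal{V}$ follows from persistence of hyperbolic periodic orbits through a Poincar\'e section.

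For density, $\mathcal{U}$ is handled by standard Kupka--Smale-type perturbations: given any $X$, a localized smooth modification near each equilibrium moves the eigenvalues off the imaginary axis. Density of $\mathcal{V}$ is the main technical step, which I would handle by cases. If $X$ already admits a periodic orbit, a small perturbation makes it hyperbolic. If $X$ admits an equilibrium but no periodic orbit, I first perturb so that a pair of eigenvalues of the linearization sits on the imaginary axis and then unfold a standard Hopf bifurcation to produce a small hyperbolic periodic orbit. In the residual case where $X$ is nowhere-vanishing without any periodic orbit, I would invoke a $C^\infty$ closing-lemma-style perturbation, supported in a small tubular neighborhood of a short arc of a recurrent orbit, to close it into a hyperbolic periodic orbit. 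Granting this, $\mathcal{U}\cap\mathcal{V}$ is open and dense, hence residual.

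Now for any $X\in\mathcal{U}\cap\mathcal{V}$, the flow of $X$ has a periodic orbit of positive period and no non-hyperbolic equilibrium, so Theorem \ref{noperiodic} forces $X\notin N$. Therefore $\mathcal{X}(P)\setminus N$ contains the residual set $\mathcal{U}\cap\mathcal{V}$, proving the corollary. The principal obstacle is the $C^\infty$-density of $\mathcal{V}$: producing a hyperbolic periodic orbit by an arbitrarily small smooth perturbation of a nowhere-vanishing vector field is a closing-lemma-type statement, and is the only non-routine step in the argument; the bifurcation-theoretic construction handles the equilibrium case directly, while the nowhere-vanishing case requires the most care.
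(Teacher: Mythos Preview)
Your approach mirrors the paper's: both exhibit a residual set of vector fields having only hyperbolic equilibria and at least one periodic orbit, then invoke Theorem~\ref{noperiodic}. The paper does this in one line by citing the Kupka--Smale Theorem, whereas you unpack the argument and correctly isolate the real difficulty, namely the $C^\infty$-density of your set $\mathcal{V}$.

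The gap is that the step you call ``the only non-routine step'' is in fact a well-known open problem. Pugh's Closing Lemma is a $C^1$ statement; whether a recurrent orbit of a smooth vector field can be closed by an arbitrarily $C^r$-small perturbation for $r\geq 2$ is unresolved in general and appears on Smale's list of problems for the next century. Thus the $C^\infty$-density of $\mathcal{V}$ cannot presently be asserted on an arbitrary closed manifold: Kuperberg's smooth counterexamples to the Seifert conjecture give aperiodic nowhere-vanishing vector fields on $S^3$, and nothing known rules out a $C^\infty$-neighbourhood of such an example remaining aperiodic. Your Hopf-bifurcation argument for the equilibrium case is fine in dimension at least $2$, but it does not help when $\chi(P)=0$, which is exactly the case (e.g.\ $P=\mathbb{T}^n$) of primary interest here. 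It is worth noting that the paper's own appeal to Kupka--Smale glosses over precisely this point: the standard Kupka--Smale Theorem asserts only that, generically, all critical elements are hyperbolic---it does not assert the \emph{existence} of a periodic orbit. So you have located the crux accurately; the resolution you propose, however, is not available with current technology.
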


\begin{proof} Let ${\mathcal V}$ be the set of $X$ in ${\mathcal X}(P)$ such that any equilibrium, if any, of the flow $\psi$ induced by $X$ is hyperbolic, and there is at least one periodic orbit for $\psi$. By Theorem \ref{noperiodic}, none of the vector fields in ${\mathcal V}$ possess nontrivial generalized symmetries. Let ${\mathcal H}$ be the subset of $X$ in ${\mathcal X}(P)$ such that the flow $\psi$ induced by $X$ has periodic orbits all of which are hyperbolic, and any equilibrium it has is hyperbolic. By definition, ${\mathcal H}\subset {\mathcal V}$, and by the Kupka-Smale Theorem, ${\mathcal H}$ is a residual subset of ${\mathcal X}(P)$.
\end{proof}

According to Theorem \ref{noperiodic}, flows which might possess nontrivial generalized symmetries are those that are equilibrium-free and without periodic orbits. The possession of a nontrivial generalized symmetry for an equilibrium-free flow places dynamical restrictions on the compact invariant sets of the flow. An invariant set $\Lambda$ for an equilibrium-free $\Phi\in{\rm Flow}(P)$ is {\it uniformly hyperbolic}\, if there is a continuous ${\bf T}\Phi$-invariant splitting ${\bf T}_p\Lambda = E^s(p) \oplus{\rm Span}(X_\Phi(p))\oplus E^u(p)$ for $p\in\Lambda$ and constants $C\geq 1$ and $\lambda\in(0,1)$ such that
\[ \Vert {\bf T}_p \Phi_t(v)\Vert \leq C\lambda^t \Vert v\Vert {\rm\ for\ }v\in E^s(p) {\rm\ and\ }t\geq 0\]
and
\[ \Vert {\bf T}_p \Phi_{-t}(v)\Vert \leq C\lambda^t \Vert v\Vert {\rm\ for\ }v\in E^u(p) {\rm\ and\ }t\geq 0.\]
Recall that if $\Lambda = P$ is uniformly hyperbolic for $\Phi$, then $\Phi$ is called Anosov.

\begin{corollary}\label{nonhyper} If $\Phi$ is an equilibrium-free flow on a closed Riemannian manifold $P$ which possesses a nontrivial generalized symmetry, then any compact invariant set for $\Phi$ is not uniformly hyperbolic $($and, in particular, $\Phi$ is not Anosov$)$.
\end{corollary}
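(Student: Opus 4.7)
The plan is to argue by contradiction: I will show that any nonempty compact invariant uniformly hyperbolic set of $\Phi$ must contain a periodic orbit, and then invoke Theorem \ref{noperiodic} to produce an equilibrium, contradicting the equilibrium-free hypothesis. Suppose $\Lambda\subset P$ is a nonempty compact $\Phi$-invariant set admitting a continuous ${\bf T}\Phi$-invariant splitting ${\bf T}_p\Lambda = E^s(p)\oplus {\rm Span}(X_\Phi(p))\oplus E^u(p)$ with the stated exponential contraction/expansion constants. Because $\Phi$ is equilibrium-free, $X_\Phi$ vanishes nowhere on $\Lambda$, so the flow direction is a genuine one-dimensional summand and the hyperbolic structure is honest throughout $\Lambda$.

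The next step is to extract a periodic orbit of $\Phi$ sitting inside $\Lambda$. Any nonempty compact $\Phi$-invariant set contains a minimal set by Zorn's lemma, every point of which is recurrent and hence non-wandering for $\Phi\vert_\Lambda$. Invoking the Anosov closing lemma for flows on a uniformly hyperbolic compact set without equilibria (see, e.g., \cite{KH}), one obtains a genuine periodic orbit of $\Phi$ arbitrarily close to any such non-wandering point. In particular, $\Phi$ possesses a periodic orbit in $P$.

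With a periodic orbit in hand, the hypothesis that $\Phi$ possesses a nontrivial generalized symmetry gives $M_\Phi\setminus\{1,-1\}\neq\emptyset$, so Theorem \ref{noperiodic} forces $\Phi$ to have a nonhyperbolic equilibrium, contradicting the hypothesis that $\Phi$ is equilibrium-free. Taking $\Lambda=P$ recovers the parenthetical claim that $\Phi$ itself cannot be Anosov.

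The only delicate point is the closing-lemma step: I need that a nonempty uniformly hyperbolic compact invariant set of a flow whose flow direction never degenerates carries periodic orbits. This is standard in hyperbolic dynamics, but it is precisely where the assumption \emph{equilibrium-free} enters, since the absence of equilibria on $\Lambda$ is what makes the flow-type splitting well-defined and the closing construction applicable; on a hyperbolic set containing an equilibrium one would have to separately argue that the equilibrium is not an isolated invariant subset, which is avoided here.
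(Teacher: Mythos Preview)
Your proof is correct and follows essentially the same route as the paper: assume a uniformly hyperbolic compact invariant set, invoke the Anosov Closing Lemma to produce a periodic orbit, and then apply Theorem~\ref{noperiodic} to contradict the equilibrium-free hypothesis. The paper's argument is terser, simply asserting the Closing Lemma step without your intermediate discussion of minimal sets and non-wandering points, but the logical skeleton is identical.
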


\begin{proof} Suppose $\Lambda$ is a uniformly hyperbolic compact invariant set of $\Phi$. By the Anosov Closing Lemma, there exists a periodic orbit for $\Phi$. But with $\Phi$ being equilibrium-free and possessing a nontrivial generalized symmetry, Theorem \ref{noperiodic} implies that $\Phi$ does not have periodic orbits. Thus $\Lambda$ can not be uniformly hyperbolic.
\end{proof}

\section{Multipliers and Lyapunov Exponents}

The multiplier of a generalized symmetry for an equilibrium-free flow is related to the Lyapunov exponents of that generalized symmetry. For a closed Riemannian manifold $P$, the {\it Lyapunov exponent}\, of $R\in{\rm Diff}(P)$ at $p\in P$ is
\[ \chi_R (p,v) = \limsup_{m\to\infty} \frac{\log\Vert {\bf T}_p R^m(v)\Vert}{m}, \ v\in {\bf T}_p P\setminus\{0\}.\]
It is independent of the Riemannian norm on $P$ because $P$ is compact. As the following result shows, any trivial generalized symmetry for an equilibrium-free flow has zero Lyapunov exponents everywhere.

\begin{theorem}\label{Lyapunov} If $\psi$ is an equilibrium-free flow on a closed Riemannian manifold $P$, then for each $R\in S_\psi$, the one-dimensional distribution $E\subset {\bf T}P$ determined by $E(p) = {\rm Span}\big(X_\psi(p)\big)$ is $R$-invariant and
\[ \chi_R\big(p,X_\psi(p)\big) = \log\vert \rho_\psi(R)\vert {\rm \ for\ all\ }p\in P.\]
\end{theorem}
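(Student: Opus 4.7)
The plan is to exploit the defining identity $R_*X_\psi = \rho_\psi(R)X_\psi$ at the level of powers of $R$, and then use compactness and the equilibrium-free hypothesis to dispose of the resulting boundary term when taking the limit in the definition of $\chi_R$.

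First I would establish the $R$-invariance of $E$. Writing $\mu=\rho_\psi(R)$, the pushforward identity reads ${\bf T}_p R\,X_\psi(p)=\mu\,X_\psi(R(p))$ for every $p\in P$. In particular ${\bf T}_p R$ carries $E(p)={\rm Span}(X_\psi(p))$ into $E(R(p))={\rm Span}(X_\psi(R(p)))$, which is the required $R$-invariance.

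Next I would iterate. Since $\rho_\psi$ is a homomorphism into ${\mathbb R}^\times$, one has $R^m\in S_\psi$ with $\rho_\psi(R^m)=\mu^m$, so the same identity applied to $R^m$ gives
\[
{\bf T}_p R^m\,X_\psi(p)=\mu^m\,X_\psi(R^m(p))
\]
for every $m\in{\mathbb Z}^+$ and every $p\in P$. Taking Riemannian norms and logarithms,
\[
\frac{\log\Vert {\bf T}_p R^m\,X_\psi(p)\Vert}{m}=\log\vert\mu\vert+\frac{\log\Vert X_\psi(R^m(p))\Vert}{m}.
\]
Because $\psi$ is equilibrium-free, $X_\psi$ is nowhere zero on the compact manifold $P$; continuity then yields constants $0<c\leq C<\infty$ with $c\leq\Vert X_\psi(q)\Vert\leq C$ for all $q\in P$. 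Consequently $\log\Vert X_\psi(R^m(p))\Vert$ stays in the bounded interval $[\log c,\log C]$, so the second summand tends to $0$ as $m\to\infty$. Taking $\limsup$ therefore gives $\chi_R(p,X_\psi(p))=\log\vert\rho_\psi(R)\vert$ uniformly in $p\in P$.

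There is no real obstacle here: the only place the hypotheses are used nontrivially is in ensuring that the error term $m^{-1}\log\Vert X_\psi(R^m(p))\Vert$ vanishes, which needs both that $X_\psi$ never vanishes (so $\log\Vert X_\psi\Vert$ is finite) and that $P$ is compact (so $\log\Vert X_\psi\Vert$ is bounded). Both are given, so the argument goes through cleanly, and the independence of $\chi_R$ from the choice of Riemannian norm on the compact $P$ confirms that the conclusion is intrinsic.
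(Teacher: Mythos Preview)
Your proof is correct and follows essentially the same approach as the paper's: both unwind the pushforward identity to ${\bf T}_pR^m(X_\psi(p))=\mu^m X_\psi(R^m(p))$, then use compactness and the equilibrium-free hypothesis to bound $\Vert X_\psi\Vert$ away from $0$ and $\infty$ so that the error term $m^{-1}\log\Vert X_\psi(R^m(p))\Vert$ vanishes in the $\limsup$. The only cosmetic difference is that you invoke the homomorphism property $\rho_\psi(R^m)=\mu^m$ explicitly, whereas the paper iterates the single-step identity directly.
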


\begin{proof} Each $R\in S_\psi$ satisfies ${\bf T}_pR\big(X_\psi(R^{-1}(p)\big) = \rho_\psi(R) X_\psi(p)$ for all $p\in P$. Then ${\bf T}_pR(X_\psi(p)) = \rho_\psi(R)X_\psi(R(p))$. The distribution $E\subset {\bf T}P$ determined by $E(p) = {\rm Span}\big(X_\psi(p)\big)$ is $R$-invariant and one-dimensional because $\rho_\psi(R)\ne 0$ and $X_\psi(p)\ne 0$ for all $p\in P$. Furthermore, it follows for all $m\geq 1$ that 
\[ {\bf T}_pR^m(X_\psi(p)) = [\rho_\psi(R)]^m X_\psi\big(R^m(p)\big),\]
and so
\[ \Vert {\bf T}_pR^m(X_\psi(p))\Vert = \vert \rho_\psi(R)\vert^m \Vert X_\psi\big(R^m(p)\big)\Vert.\]
The flow $\psi$ being smooth and equilibrium-free on the compact manifold $P$ implies that $\Vert X_\psi\big(R^m(p)\big)\Vert$ is both bounded away from $0$ and bounded above uniformly in $m$ for each $p\in P$. Therefore
\[\chi_R\big(p,X_\psi(p)\big) = \limsup_{m\to\infty} \frac{ m \log\vert \rho_\psi(R)\vert + \log \Vert X_\psi\big(R^m(p)\big)\Vert}{m} = \log\vert \rho_\psi(R)\vert\]
for all $p\in P$.
\end{proof}

Global uniform hyperbolicity is a dynamical condition on a generalized symmetry of an equilibrium-free flow that guarantees that it is nontrivial. An $R\in{\rm Diff}(P)$ is {\it Anosov} if there is a continuous ${\bf T}R$-invariant splitting $T_p P = E^s(p)\oplus E^u(p)$ for $p\in P$, and constants $c>0$ and $\lambda\in(0,1)$ independent of $p\in P$, such that
\[ \Vert {\bf T}_p R^m(v)\Vert \leq c \lambda^m \Vert v\Vert {\rm \ for\ }v\in E^s(p){\rm\ and \ }m\geq 0,\]
and
\[ \Vert {\bf T}_p R^{-m}(v)\Vert \leq c \lambda^m \Vert v\Vert {\rm \ for\ }v\in E^u(p){\rm\ and \ }m\geq 0\]
with the angle between $E^s(p)$ and $E^u(p)$ bounded away from $0$ (see \cite{BP}).

\begin{theorem}\label{multiplierAnosov} Let $\psi$ be an equilibrium-free flow on a closed Riemannian manifold $P$. If $R\in S_\psi$ is Anosov, then $\vert\rho_\psi(R)\vert \ne 1$.
\end{theorem}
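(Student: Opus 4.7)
The plan is to argue by contradiction. Suppose $\vert\rho_\psi(R)\vert = 1$; then Theorem~\ref{Lyapunov} forces $\chi_R\big(p, X_\psi(p)\big) = 0$ for every $p \in P$. Since $\psi$ is equilibrium-free we have $X_\psi(p) \ne 0$ everywhere, so it suffices to show that an Anosov diffeomorphism cannot assign a vanishing Lyapunov exponent to any nonzero tangent vector; this produces the contradiction.

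For each $p \in P$, decompose $X_\psi(p) = v^s(p) + v^u(p)$ relative to the Anosov splitting $E^s(p) \oplus E^u(p)$. The uniform lower bound on the angle between $E^s$ and $E^u$ supplies a constant $\kappa > 0$ with $\Vert a + b\Vert \ge \kappa\,\max(\Vert a\Vert, \Vert b\Vert)$ whenever $a \in E^s(p)$ and $b \in E^u(p)$. I then split into two cases. If $v^u(p_0) \ne 0$ at some $p_0$, the ${\bf T}R$-invariance of $E^u$ places each ${\bf T}_{p_0} R^m v^u(p_0)$ in $E^u\big(R^m(p_0)\big)$, and the backward-contraction estimate on $E^u$ rearranges to the forward expansion bound $\Vert {\bf T}_{p_0} R^m v^u(p_0)\Vert \ge c^{-1} \lambda^{-m}\Vert v^u(p_0)\Vert$; combined with the angle estimate applied to the image decomposition of ${\bf T}_{p_0} R^m X_\psi(p_0)$, this yields $\chi_R\big(p_0, X_\psi(p_0)\big) \ge -\log \lambda > 0$, contradicting the vanishing established above. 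If instead $v^u(p) = 0$ for all $p$, then $X_\psi(p) \in E^s(p)$ identically, and the Anosov contraction forces $\chi_R\big(p, X_\psi(p)\big) \le \log \lambda < 0$, again a contradiction.

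The only mildly delicate point is converting the backward-contraction inequality on $E^u$ into a uniform forward expansion on $E^u$, a standard rearrangement that uses only the ${\bf T}R$-invariance of $E^u$ and the prescribed Anosov bound. Everything else follows immediately from Theorem~\ref{Lyapunov}, the equilibrium-free hypothesis, and the definition of Anosov, so the resulting proof should run to only a few lines once these ingredients are assembled.
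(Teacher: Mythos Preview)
Your argument is correct. Both your proof and the paper's begin by decomposing $X_\psi(p)=v^s(p)+v^u(p)$ along the Anosov splitting, but the mechanisms for reaching a contradiction differ. You invoke Theorem~\ref{Lyapunov} to reduce the problem to showing that an Anosov diffeomorphism has no zero Lyapunov exponent on a nonzero vector, and then verify this standard fact via the uniform angle bound and the expansion/contraction estimates, splitting into the two cases $v^u(p_0)\ne 0$ somewhere versus $v^u\equiv 0$. The paper, by contrast, does not cite Theorem~\ref{Lyapunov} at all: after reducing to $\rho_\psi(R)=1$, it uses the identity ${\bf T}_pR^m(X_\psi(p))=X_\psi(R^m(p))$ and compactness to pass to a limit point $p_\infty$ where $X_\psi(p_\infty)=v^u(p_\infty)$, and then iterates \emph{backward} to force $X_\psi(R^{-m}(p_\infty))\to 0$, producing an equilibrium directly. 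Your route is shorter and more modular, exploiting that Theorem~\ref{Lyapunov} has already done the work of linking the multiplier to the Lyapunov exponent; the paper's route is self-contained and yields the contradiction geometrically by locating a zero of $X_\psi$ rather than computing an exponent.
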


\begin{proof} Suppose $R\in S_\psi$ is Anosov and $\vert\rho_\psi(R)\vert=1$. If $\rho_\psi(R)=-1$, then replacing $R$ with $R^2$ gives $\rho_\psi(R) =1$ with $R$ Anosov. Let $E^s(p)\oplus E^u(p)$ be the continuous ${\bf T}R$-invariant splitting with its associated contraction estimates. The generating vector field for $\psi$ has a continuous decomposition $X_\psi(p) = v^s(p) + v^u(p)$ for $v^s(p)\in E^s(p)$ and $v^u(p)\in E^u(p)$. From the contraction estimates of ${\bf T}_pR$ on $E^s(p)$ and $E^u(p)$ it follows for all $p\in P$ that
\[ \Vert {\bf T}_pR^m(v^s(p))\Vert \to 0, \ \ \Vert {\bf T}_p R^{-m}(v^u(p))\Vert\to 0{\rm\ as\ } m\to\infty.\]
With $\rho_\psi(R) = 1$ and $X_\psi(p) = v^s(p) + v^u(p)$, the equation $R_*X_\psi = \rho_\psi(R)X_\psi$ becomes
\[ {\bf T}_p R^m (v^s(p) + v^u(p)) = X_\psi(R^m(p)) = v^s(R^m(p))+ v^u(R^m(p))\]
for  all $p\in P$ and $m\in {\mathbb Z}$. The ${\bf T}R$-invariance of $E^u(p)$ implies that
\[  {\bf T}_p R^m(v^u(p)) = v^u(R^m(p)).\]
For a fixed $p\in P$, there is by the compactness of $P$, a subsequence ${\rm R}^{m_i}(p)$ converging to a point, say $p_\infty$, as $m_i\to \infty$. Hence
\begin{align*}
X_\psi(p_\infty)
& = \lim_{i\to\infty} X_\psi(R^{m_i}(p)) \\
& = \lim_{i\to\infty} {\bf T}_pR^{m_i}(v^s(p)+v^u(p)) \\
& = \lim_{i\to\infty} \big[{\bf T}_p R^{m_i}(v^s(p)) + v^u(R^{m_i}(p))\big] \\
& = v^u(p_\infty).
\end{align*}
It now follows that
\[ \lim_{m\to\infty}X_\psi(R^{-m}(p_\infty)) = \lim_{m\to\infty}{\bf T}_{p_\infty} R^{-m}(X_\psi(p_\infty)) = \lim_{m\to\infty}{\bf T}_{p_\infty} R^{-m}(v^u(p_\infty))=0.\]
Compactness of $P$ implies that $X_\psi$ has a zero. But $\psi$ is an equilibrium-free flow, and therefore $\vert\rho_\psi(R)\vert\ne 1$.
\end{proof}

Any equilibrium-free flow with a generalized symmetry that is Anosov does not have any periodic orbits according to Lemma \ref{noperiodic} and Theorem \ref{multiplierAnosov}. On the other hand, for an equilibrium-free flow that is without nontrivial generalized symmetries, Theorem \ref{multiplierAnosov} implies that none of its generalized symmetries can be Anosov. However, the converse of Theorem \ref{multiplierAnosov} is false. As illustrated next, a partially hyperbolic diffeomorphism can be a nontrivial generalized symmetry of an equilibrium-free flow without periodic orbits.

\begin{example}\label{nonquasi}{\rm Let $P={\mathbb T}^n={\mathbb R}^n/{\mathbb Z}^n$, the $n$-torus, equipped with global coordinates $(\theta_1,\theta_2,\dots,\theta_n)$. The flow $\psi$ generated by
\[ X_\psi = \frac{\partial}{\partial\theta_1} + \frac{\partial}{\partial\theta_2} + \cdot\cdot\cdot + \frac{\partial}{\partial\theta_{n-2}} + \frac{\partial}{\partial\theta_{n-1}}  + \sqrt 2 \frac{\partial}{\partial\theta_n}\]
is equilibrium-free and without periodic orbits. Let $R\in{\rm Diff}({\mathbb T}^n)$ be induced by the ${\rm GL}(n,{\mathbb Z})$ matrix
\[ B = \begin{bmatrix} 1 & 0 & \hdots & 0 & 0 & 1 \\ 0 & 1 & \hdots & 0 & 0 & 1 \\ \vdots & \vdots & \ddots & \vdots & \vdots & \vdots \\ 0 & 0 & \hdots & 1 & 0 & 1 \\ 0 & 0 & \hdots & 0 &  1 & 1 \\ 0 & 0  & \hdots & 0 &  2 & 1\end{bmatrix},\]
that is, ${\bf T}R = B$. This $R$ is a nontrivial generalized symmetry of $\psi$ because $R_*X_\psi = (1+\sqrt 2)X_\psi$, i.e., $\rho_\psi(R) = 1+\sqrt 2$. However $B$ has an eigenvalue of $1$ of multiplicity $n-2$, and the eigenspace corresponding to this eigenvalue is an $(n-2)$-dimensional center distribution for $R$. The other two eigenvalues of $B$ are $1\pm\sqrt 2$ and the corresponding eigenspaces are the $1$-dimensional unstable and stable distributions for $R$ respectively. Thus $R$ is partially hyperbolic.
}\end{example}

\section{Restrictions on Multipliers}

Algebraic restrictions may occur on $\rho_\Phi(R)$ for a nontrivial generalized symmetry $R$ of an equilibrium-free flow $\Phi$ on a compact manifold $P$ without boundary. This happens when there are interactions beyond $R_*X_\Phi = \rho_\Phi(R)X_\Phi$ between the dynamics of $R$ and $\Phi$ on submanifolds diffeomorphic to ${\mathbb T}^k$ for some $2\leq k\leq {\rm dim}(P)$. A {\it real algebraic integer}\, is the root of a monic polynomial with integer coefficients, and its degree is the degree of its minimal polynomial. As shown by Wilson in \cite{WI}, there is for any integer $k$ with $2\leq k\leq {\rm dim}(P)-2$, an equilibrium-free flow $\Phi$ on $P$ which has an invariant submanifold $N$ diffeomorphic to ${\mathbb T}^k$ on which $\Phi$ is {\it minimal}, i.e., every orbit of $\Phi$ in $N$ is dense in $N$. The well-known prototype of a minimal flow on ${\mathbb T}^k$ is an {\it irrational flow}, i.e., a $\psi$ on ${\mathbb T}^k$ for which $X_\psi$ is a constant vector field whose components (or frequencies) are rationally independent (see \cite{FA}), i.e., linearly independent over ${\mathbb Q}$.  According to Basener in \cite{BA}, any minimal flow on ${\mathbb T}^n$ is topologically conjugate to an irrational flow.

\begin{lemma}\label{subset} If a flow $\phi$ on a compact manifold $N$ without boundary is topologically conjugate to a flow $\psi$ on ${\mathbb T}^k$ $(k={\rm dim}(N))$ and $\psi$ is minimal, then $M_\phi\subset M_\psi$ and each $\mu\in M_\phi\setminus\{1,-1\}$ is an algebraic integer of degree between $2$ and $k$ inclusively.
\end{lemma}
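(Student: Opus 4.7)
The plan is to push each generalized symmetry of $\phi$ through the topological conjugacy and then through Basener's theorem to an actual irrational flow on ${\mathbb T}^k$, where the integer lattice structure of ${\mathbb Z}^k\subset{\mathbb R}^k$ together with the ${\mathbb Q}$-linear independence of the frequencies forces the stated algebraic-integer bound on every multiplier.

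Fix the topological conjugacy $h:N\to{\mathbb T}^k$ with $h\circ\phi_t=\psi_t\circ h$ and, by Basener's theorem, a homeomorphism $g:{\mathbb T}^k\to{\mathbb T}^k$ carrying $\psi$ to an irrational flow $\chi$ whose constant generator $\vec\alpha=(\alpha_1,\dots,\alpha_k)$ has ${\mathbb Q}$-linearly independent components. For $R\in S_\phi$ with $\mu=\rho_\phi(R)\in M_\phi$, the composition $\bar R:=g\circ h\circ R\circ h^{-1}\circ g^{-1}$ is a self-homeomorphism of ${\mathbb T}^k$ satisfying $\bar R\circ\chi_t=\chi_{\mu t}\circ\bar R$. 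Lift $\bar R$ to $\widehat{\bar R}:{\mathbb R}^k\to{\mathbb R}^k$; the lift has the form $\widehat{\bar R}(x)=Bx+u(x)$, where $B\in{\rm GL}(k,{\mathbb Z})$ is the induced automorphism of $H_1({\mathbb T}^k)\cong{\mathbb Z}^k$ and $u:{\mathbb R}^k\to{\mathbb R}^k$ is ${\mathbb Z}^k$-periodic, hence bounded. The lifted intertwining is $\widehat{\bar R}(x+t\vec\alpha)=\widehat{\bar R}(x)+\mu t\vec\alpha$, which rearranges to
\[ t\bigl(B\vec\alpha-\mu\vec\alpha\bigr)=u(x)-u(x+t\vec\alpha).\]
The right side is bounded while the left grows linearly in $t$, forcing $B\vec\alpha=\mu\vec\alpha$. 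Thus $\mu$ is a real eigenvalue of the integer matrix $B$ and hence a root of its monic degree-$k$ integer characteristic polynomial; by Gauss's lemma the ${\mathbb Q}$-minimal polynomial of $\mu$ also has integer coefficients, so $\mu$ is an algebraic integer of degree at most $k$.

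For the lower bound, suppose $\mu\in M_\phi\setminus\{1,-1\}$ has degree $1$. Then $\mu\in{\mathbb Z}$, and $V:=\ker(B-\mu I_k)\subset{\mathbb R}^k$ is a rational subspace containing $\vec\alpha$. Any proper rational subspace of ${\mathbb R}^k$ is cut out by a nonzero integer linear form, and such a form applied to $\vec\alpha$ would give a nontrivial ${\mathbb Q}$-linear relation among the $\alpha_i$'s, contradicting their rational independence; hence $V={\mathbb R}^k$, i.e., $B=\mu I_k$. Then $\det B=\mu^k\in\{\pm1\}$ forces $\mu=\pm1$, contrary to hypothesis, so $\deg\mu\ge2$.

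For the inclusion $M_\phi\subset M_\psi$, one must upgrade the homeomorphism $\tilde R:=h\circ R\circ h^{-1}$ of ${\mathbb T}^k$ to a smooth diffeomorphism. From $\tilde R\circ\psi_t=\psi_{\mu t}\circ\tilde R$ and the smoothness and non-vanishing of $X_\psi$, flow-box coordinates give $\tilde R$ the local form $(y,s)\mapsto\bigl(r(y),\,\tau(y)+\mu s\bigr)$, so smoothness along orbits is automatic; but smoothness of the transverse components $r$ and $\tau$ is the delicate point and, in my view, the main obstacle in this plan. Propagating transverse smoothness across ${\mathbb T}^k$ will require exploiting the minimality (and hence recurrence) of $\psi$, together with the rigidity that the intertwining imposes on the transverse Poincar\'e return maps. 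Once this smoothing is carried out, $\tilde R\in S_\psi$ with $\rho_\psi(\tilde R)=\mu$, and the inclusion $M_\phi\subset M_\psi$ follows.
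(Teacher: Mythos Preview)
Your argument for the algebraic-integer bound is correct and follows the paper's route; your degree-$\geq 2$ step is a clean self-contained version of what the paper obtains by citing an earlier result.

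The gap you flag for $M_\phi\subset M_\psi$ is self-inflicted. You do not need flow-box coordinates or any transverse smoothing of $\tilde R$: your own lift calculation, pushed one line further, already settles it. Once $B\vec\alpha=\mu\vec\alpha$, your identity $t(B\vec\alpha-\mu\vec\alpha)=u(x)-u(x+t\vec\alpha)$ collapses to $u(x)=u(x+t\vec\alpha)$ for all $x$ and all $t$. Since $\chi$ is minimal, the set $\{x+t\vec\alpha\bmod\mathbb{Z}^k:t\in\mathbb{R}\}$ is dense in $\mathbb{T}^k$, so the continuous $\mathbb{Z}^k$-periodic function $u$ is constant. Hence $\bar R$ is \emph{affine}, in particular a $C^\infty$ diffeomorphism, and $\bar R\in S_\chi$ with multiplier $\mu$. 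This is precisely the paper's argument: it invokes Basener at the outset to assume without loss of generality that $\psi$ is itself the irrational flow (so your $\chi=\psi$ and $g$ is the identity), whereupon the affineness of the conjugated homeomorphism immediately gives $\mu\in M_\psi$. Your decision to keep $\psi$ distinct from $\chi$ is what manufactured the obstacle of smoothing $\tilde R=g^{-1}\bar R\,g$ through the merely continuous $g$.
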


\begin{proof} Suppose there is a homeomorphism $h:N\to{\mathbb T}^k$ such that $h\phi_t = \psi_t h$ for all $t\in{\mathbb R}$. Without loss of generality, it is assumed that $\psi$ is an irrational flow, since any minimal flow on ${\mathbb T}^k$ is topologically conjugate to an irrational flow.

Let $V\in S_\phi$ and set $\mu=\rho_\phi(V)$. In terms of the homeomorphism $Q=hVh^{-1}$ on ${\mathbb T}^k$, the multiplier $\mu$ passes through $h$ to $\psi$:
\[ Q\psi(t,\theta) = hR\phi(t,h^{-1}(\theta)) = h\phi(\mu t,Rh^{-1}(\theta)) = \psi(\mu t, Q(\theta)), {\rm\ for\ all\ }t\in{\mathbb R}, \theta \in{\mathbb T}^k,\] 
i.e., $Q\psi_t = \psi_{\mu t}Q$. This does not say yet that $Q$ is a generalized symmetry of $\psi$ with multiplier $\mu$ because $Q$ is only a homeomorphism at the moment.

Following \cite{AP}, the homeomorphism $Q$ of ${\mathbb T}^k$ lifts uniquely to $\hat Q(x)=\hat L(x) + \hat U(x) +\hat c$ on ${\mathbb R}^k$, i.e., $\pi \hat Q = Q\pi$ where $\pi:{\mathbb R}^k\to{\mathbb T}^k$ is the covering map. Here the linear part of this lift is $\hat L(x) = Bx$ for $B\in {\rm GL}(k,{\mathbb Z})$; the periodic part is $\hat U(x)$, i.e., $\hat U(x+\nu) = \hat U(x)$ for all $x\in{\mathbb R}^k$ and all $\nu\in {\mathbb Z}^k$, is continuous and satisfies $\hat U(0)=0$; and the constant part is $\hat c\in[0,1)^k$. A lift of the irrational flow $\psi$ to ${\mathbb R}^n$ is $\hat \psi(t,x) = x+td$ where $d=X_\psi$ and $x\in{\mathbb R}^k$. For all $t\in{\mathbb R}$, a lift of $Q\psi_t$ to ${\mathbb R}^k$ is $\hat Q\hat \psi_t$ and a lift of $\psi_{\mu t}Q$ is $\hat \psi_{\mu t}\hat Q$. These two lifts differ by a constant $m\in{\mathbb Z}^k$ since $Q\psi_t = \psi_{\mu t}Q$ for all $t\in{\mathbb R}$:
\[ \hat Q\hat \psi(t,x)  = \hat \psi(\mu t,\hat Q(x)) + m {\rm\ for\ all\ }t\in{\mathbb R}, x\in{\mathbb R}^k.\]
Since
\[ \hat \psi(t,x) = Bx + tBd + \hat U(x+td) + \hat c\]
and 
\[ \hat\psi(\mu t,\hat Q(x)) = Bx + \hat U(x) + \hat c + \mu t d,\]
it follows that
\[ \hat U(x+td) - \hat U(x) = - t(B-\mu I)d + m {\rm\ for\ all\ }t\in{\mathbb R}, x\in{\mathbb R}^k,\]
where $I$ is the identity matrix. Evaluation of this at $x=0$ gives
\[ \hat U(td) = -t(B-\mu I)d + m {\rm\ for\ all\ }t\in{\mathbb R}.\]
However, $\hat U$ is bounded since it is continuous and periodic. This boundedness implies that $(B-\mu I)d = 0$, and so $\hat U(td) = m$ for all $t\in{\mathbb R}$. Evaluation of this at $t=0$ shows that $m=0$ because $\hat U(0)=0$. Thus
\[ 0=\hat U(td) = \hat U (\hat \psi_t(0)){\rm \ for\  all\ }t\in{\mathbb R}.\] Since $\hat U$ is periodic and continuous on ${\mathbb R}^n$, it is a lift of a homeomorphism $U$ on ${\mathbb T}^n$. A lift of $U\psi_t$ is $\hat U\hat\psi_t$, and so
\[ 0=\pi \hat U(\hat \psi_t(0)) = U(\psi_t(0)) {\rm\ for\ all\ }t\in{\mathbb R}.\]
The minimality of $\psi$ implies that $U$ is $0$ on a dense subset of ${\mathbb T}^n$. By continuity, $U = 0$ which implies that $\hat U=0$. Thus $\hat Q$ is $C^\infty$, and so $Q$ is a diffeomorphism, whence $Q\in S_\psi$ with $\rho_\psi(Q) = \mu$. Since $\mu\in M_\phi$, then $M_\phi\subset M_\psi$.

The multiplier $\mu \in M_\psi$ is a real algebraic integer of degree at most $k$ because it satisfies $(B-\mu I)d = 0$ for nonzero $d$, i.e., the characteristic polynomial of $B$ is a monic polynomial of degree $k$ with integer coefficients. The only rational roots this characteristic polynomial can have are $\pm 1$ since $\psi$ is an irrational flow, i.e., $M_\psi\cap{\mathbb Q}=\{1,-1\}$ (see Corollary 4.4 in \cite{BA2}). However, if $\mu \ne \pm 1$, then the minimal polynomial for $\mu$ has degree between $2$ and $k$ inclusively.
\end{proof}

\begin{theorem}\label{algebraicmultiplier} Suppose for an equilibrium-free flow $\Phi$ on $P$ there is a $\Phi$-invariant compact submanifold $N$ without boundary and $R\in S_\Phi$ with $\vert\rho_\Phi(R)\vert \ne 1$ such that $R(N)\cap N\ne \emptyset$. If ${\rm dim}(N)=2$ and $N$ is orientable, then $\rho_\Phi(R)$ is a real algebraic integer of degree $2$. If ${\rm dim}(N)\geq 3$ with $N$ diffeomorphic to ${\mathbb T}^{{\rm dim}(N)}$ and $\Phi\vert N$ is a minimal flow, then $\rho_\Phi(R)$ is a real algebraic integer of degree between $2$ and ${\rm dim}(N)$ inclusively.
\end{theorem}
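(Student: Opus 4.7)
The plan is to restrict $R$ and $\Phi$ to $N$ and invoke Lemma \ref{subset}. Two preliminaries are required: (i) $R(N)=N$, so that $R|N\in\mathrm{Diff}(N)$ is a generalized symmetry of $\Phi|N$ with multiplier $\rho_\Phi(R)$; and (ii) $\Phi|N$ is minimal and $N$ is diffeomorphic to $\mathbb{T}^{\dim N}$, the exact input Lemma \ref{subset} requires.

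For (ii), I first observe that $\Phi|N$ has no periodic orbits in either case, for a periodic orbit of $\Phi|N$ would be a periodic orbit of $\Phi$, and the nontriviality of $\rho_\Phi(R)$ together with Theorem \ref{noperiodic} would force a nonhyperbolic equilibrium of $\Phi$, contradicting $\Phi$ being equilibrium-free. In the second case, $\Phi|N$ is minimal by hypothesis and $N\cong\mathbb{T}^{\dim N}$. In the first case, Poincar\'e--Hopf applied to the nonvanishing vector field $X_\Phi|N$ on the closed orientable surface $N$ forces $\chi(N)=0$, hence $N\cong\mathbb{T}^2$; and since the $C^\infty$ flow $\Phi|N$ on $\mathbb{T}^2$ has neither equilibria nor periodic orbits, Denjoy's theorem (applied to the $C^2$ Poincar\'e return map on a transverse circle, whose rotation number is necessarily irrational) implies $\Phi|N$ is minimal.

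For (i), choose $q=R(p)\in R(N)\cap N$ with $p\in N$. The intertwining $\Phi_t\circ R=R\circ \Phi_{t/\rho_\Phi(R)}$ together with the $\Phi$-invariance of $N$ puts the entire $\Phi$-orbit of $q$ inside $R(N)\cap N$; by minimality of $\Phi|N$, this orbit is dense in $N$, and since $R(N)$ is compact (hence closed in $P$), we obtain $N\subset R(N)$. Applying the same argument to $R^{-1}$, whose multiplier $1/\rho_\Phi(R)$ is again nontrivial and which satisfies $p\in R^{-1}(N)\cap N$, gives $N\subset R^{-1}(N)$, whence $R(N)\subset N$, and therefore $R(N)=N$.

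With $R(N)=N$ in hand, the restriction $R|N$ is a diffeomorphism of $N$, and restricting $R_*X_\Phi=\rho_\Phi(R)X_\Phi$ to $N$ shows that $R|N\in S_{\Phi|N}$ with $\rho_{\Phi|N}(R|N)=\rho_\Phi(R)$. Since $N\cong\mathbb{T}^{\dim N}$ and $\Phi|N$ is minimal, Lemma \ref{subset} applies to $\Phi|N$ and delivers the asserted degree bounds: in the first case $\dim N=2$ pins the degree of $\rho_\Phi(R)$ at exactly $2$, while in the second case the degree lies between $2$ and $\dim N$ inclusively. I expect the main technical obstacle to be step (i)---promoting $R(N)\cap N\ne\emptyset$ to $R(N)=N$---which in turn rests on establishing the minimality of $\Phi|N$ in the two-dimensional case via Theorem \ref{noperiodic} and Denjoy's theorem.
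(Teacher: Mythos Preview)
Your proposal is correct and follows essentially the same route as the paper: establish minimality of $\Phi|N$ on a torus (you invoke Poincar\'e--Hopf and Denjoy where the paper cites Poincar\'e--Bendixson theory for surfaces), promote $R(N)\cap N\ne\emptyset$ to $R(N)=N$ via minimality, restrict $R$ to $N$, and feed the restriction into Lemma~\ref{subset}. The only cosmetic difference is that the paper obtains $R(N)=N$ by arguing that a single orbit closure equals both $N$ and $R(N)$ (using that $R(N)$ is itself $\Phi$-invariant with minimal restricted flow), whereas you argue the two inclusions separately via $R$ and $R^{-1}$.
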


\begin{proof} Let $\mu=\rho_\Phi(R)$ with $\vert\mu\vert\ne 1$ and $k={\rm dim}(N)$. By Theorem \ref{noperiodic} there are no periodic orbits for the equilibrium-free flow $\Phi\vert N$. If $k=2$ and $N$ is orientable, the Poincar\'e-Bendixson Theorem implies that $N$ is diffeomorphic to ${\mathbb T}^2$, and that $\Phi\vert N$ is minimal. If $k\geq 3$, it is assumed that $N$ is diffeomorphic to ${\mathbb T}^k$ and that $\Phi\vert N$ is minimal. 

The submanifold $R(N)$ is $\Phi$-invariant because $N$ is $\Phi$-invariant and $R\in S_\Phi$, i.e., for $p\in R(N)$ and $q= R^{-1}(p)\in N$,
\[ \Phi(t,p) = \Phi(t,R(q)) = R\Phi(t/\mu,q) \subset R(N) {\rm \ for\ all\ }t\in{\mathbb R}.\]
By hypothesis, there is $\tilde p\in R(N)\cap N$. By the $\Phi$-invariance of $N$ and $R(N)$ and the minimality of $\Phi\vert N$ it follows that $\overline{{\mathcal O}_\Phi(\tilde p)} = N$ and $\overline{{\mathcal O}_\Phi(\tilde p)} = R(N)$. This gives $R(N) = N$, i.e., that $N$ is $R$-invariant.

The nontrivial generalized symmetry $R$ restricts to a nontrivial generalized symmetry of $\Phi\vert N$ because $N$ is $\Phi$-invariant and $R$-invariant. If $V=R\vert N$ and $\phi$ is the flow on $N$ determined by $X_\phi = X_\Phi\vert N$, then $R_*X_\Phi = \mu X_\Phi$ becomes 
\[ {\bf T} V X_\phi (p) = \mu X_\phi (V(p)){\rm\ for\ }p\in N.\]
Since $V\in{\rm Diff}(N)$, then $V\in S_\phi$ with $\rho_\phi(V) = \mu$.

By \cite{BA}, minimality of $\phi=\Phi\vert N$ with $N$ diffeomorphic to ${\mathbb T}^n$ implies that $\phi$ is topologically conjugate to an irrational flow $\psi$. Applying Lemma \ref{subset} shows that $\mu$ is an algebraic integer of degree between $2$ and $k$ inclusively.
\end{proof}

\begin{corollary}\label{algebraicnature} Suppose $\Phi$ is a minimal flow on $\mathbb T^n$, and $R\in S_\Phi$. If $\vert \rho_\Phi(R)\vert \ne 1$, then $\rho_\Phi(R)$ is an algebraic integer of degree between $2$ and $n$ inclusively.
\end{corollary}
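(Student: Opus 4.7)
The plan is to obtain this corollary as an immediate specialization of the earlier results in this section, with $\mathbb{T}^n$ itself playing the role of the submanifold $N$. Since $\Phi$ is minimal on $\mathbb{T}^n$ and $R$ is a global diffeomorphism, the set $R(\mathbb{T}^n)\cap\mathbb{T}^n=\mathbb{T}^n$ is nonempty, so the hypotheses of Theorem \ref{algebraicmultiplier} are met trivially with $N=\mathbb{T}^n$; note $\mathbb{T}^2$ is orientable, taking care of the low-dimensional case. Applying that theorem directly yields the conclusion.

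Equivalently, and perhaps more transparently, I would invoke Lemma \ref{subset}. By Basener's theorem \cite{BA}, the minimal flow $\Phi$ on $\mathbb{T}^n$ is topologically conjugate to an irrational flow $\psi$ on $\mathbb{T}^n$, which is itself minimal. Lemma \ref{subset} applied with $\phi=\Phi$, $\psi$ the irrational conjugate, and $k=n$ then gives $M_\Phi\subset M_\psi$ and asserts that every element of $M_\Phi\setminus\{1,-1\}$ is a real algebraic integer of degree between $2$ and $n$ inclusively.

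Finally, the hypothesis $\vert\rho_\Phi(R)\vert\ne 1$ guarantees $\rho_\Phi(R)\notin\{1,-1\}$, so it lies in $M_\Phi\setminus\{1,-1\}$ and the algebraic-integer conclusion applies to it. The only edge case to flag is $n=1$, where Corollary 4.4 of \cite{BA2} (already cited in the proof of Lemma \ref{subset}) forces $M_\psi\cap\mathbb{Q}=\{1,-1\}$, so the hypothesis $\vert\rho_\Phi(R)\vert\ne 1$ is vacuously impossible and there is nothing to prove; for $n\geq 2$ the stated degree range is nonempty.

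There is no real obstacle here: both routes (citing Theorem \ref{algebraicmultiplier} or Lemma \ref{subset}) are one-line reductions, and the substantive work—the lifting argument that forces the periodic part $\hat U$ to vanish and promotes the topological conjugacy to a smooth generalized symmetry of the irrational model—has already been carried out in the proof of Lemma \ref{subset}. The corollary is essentially just the observation that when the invariant submanifold of Theorem \ref{algebraicmultiplier} is allowed to be the ambient torus itself, the $R$-invariance and minimality hypotheses come for free.
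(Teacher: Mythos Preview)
Your proposal is correct, and the first route you outline---taking $N=\mathbb{T}^n$ in Theorem \ref{algebraicmultiplier}---is exactly the paper's own proof, which consists of the single sentence that any nontrivial $R\in S_\Phi$ together with $N=\mathbb{T}^n$ satisfy the hypotheses of that theorem. Your alternative route via Lemma \ref{subset} is of course equivalent, since the proof of Theorem \ref{algebraicmultiplier} itself reduces to Lemma \ref{subset}.
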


\begin{proof} Any nontrivial generalized symmetry $R$ of $\Phi$ together with $N=\mathbb T^n$ satisfy the conditions of Theorem \ref{algebraicmultiplier}.
\end{proof}

\section{A Group-Theoretical Characterization of Irrational Flows}

Minimality of an equilibrium-free flow on ${\mathbb T}^n$ places a semidirect product structure the generalized symmetry group of that flow. A group ${\mathcal S}$ is the {\it semidirect product}\, of two subgroups ${\mathcal N}$ and ${\mathcal H}$ if ${\mathcal N}$ is a normal subgroup of ${\mathcal S}$, if ${\mathcal S}={\mathcal N}{\mathcal H}$, and if ${\mathcal N}\cap {\mathcal H}$ is the identity element of ${\mathcal S}$. Notational this is written
\[ {\mathcal S} = {\mathcal N}\rtimes_\Gamma {\mathcal H},\]
where $\Gamma: {\mathcal H}\to {\rm Aut}({\mathcal N})$ is the conjugating homomorphism of the semidirect product, i.e., $\Gamma({\mathfrak h})({\mathfrak n}) = {\mathfrak h}{\mathfrak n}{\mathfrak h}^{-1}$ for ${\mathfrak h}\in {\mathcal H}$ and ${\mathfrak n}\in {\mathcal N}$. A normal subgroup of $S_\psi$ is $\ker \rho_\psi$ for any flow $\psi$. A normal subgroup of ${\rm Diff}(\mathbb T^n)$ is the abelian group ${\rm Trans}(\mathbb T^n)$ of translations. Each {\it translation}\, on ${\mathbb T}^n$ is of the form ${\mathcal T}_c(\theta) = \theta + c$ for $c\in{\mathbb T}^n$. If $\psi$ is a flow on ${\mathbb T}^n$ with $X_\psi$ a constant, then ${\rm Trans}({\mathbb T}^n)\subset {\rm ker}\rho_\psi$ because $({\mathcal T}_c)_* X_\psi = X_\psi$ for all $c\in{\mathbb T}^n$.

\begin{lemma}\label{kernel} Let $\psi$ be a flow on ${\mathbb T}^n$ for which $X_\psi$ is a constant. If ${\rm ker}\rho_\psi = {\rm Trans}({\mathbb T}^n)$, then $\psi$ is irrational.
\end{lemma}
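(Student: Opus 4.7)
The plan is to argue by contrapositive: assume $\psi$ is not irrational, meaning the components of the constant vector field $d = X_\psi$ are rationally dependent, and construct an element of $\ker \rho_\psi$ that is not a translation, contradicting the hypothesis. Rational dependence provides some nonzero $m \in {\mathbb Z}^n$ with $m \cdot d = 0$.

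Using this $m$, I would introduce the smooth function $g(\theta) = \sin(2\pi m \cdot \theta)$, which is ${\mathbb Z}^n$-periodic and hence descends to a nonconstant smooth function on ${\mathbb T}^n$ satisfying $\nabla g \cdot d \equiv 0$ because $\nabla g(\theta) = 2\pi m \cos(2\pi m\cdot\theta)$. Fix any nonzero $v \in {\mathbb R}^n$ and a sufficiently small $\varepsilon > 0$, and define
\[ R(\theta) = \theta + \varepsilon\, g(\theta)\, v \pmod{{\mathbb Z}^n}. \]
Because $g$ is ${\mathbb Z}^n$-periodic, $R$ is well-defined on ${\mathbb T}^n$, and its differential is the rank-one perturbation ${\bf T}R = I + \varepsilon\, v\, (\nabla g)^T$, whose determinant is $1 + \varepsilon(v \cdot \nabla g)$ by the matrix determinant lemma. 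For $\varepsilon$ small enough (depending on bounds for $\nabla g$ and on $v$), $R$ is $C^1$-close to the identity and is a global diffeomorphism of ${\mathbb T}^n$.

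Next I would verify $R \in \ker \rho_\psi$. At any point $q \in {\mathbb T}^n$,
\[ {\bf T}_q R(d) = d + \varepsilon\, v\, \bigl(\nabla g(q) \cdot d\bigr) = d, \]
because $\nabla g \cdot d \equiv 0$. Since $X_\psi$ is the constant field $d$, this gives $R_* X_\psi = X_\psi$, hence $\rho_\psi(R) = 1$. However, $R$ is not a translation: the displacement $\varepsilon g(\theta) v$ depends nontrivially on $\theta$ because $g$ is nonconstant. Thus $R \in \ker \rho_\psi \setminus {\rm Trans}({\mathbb T}^n)$, contradicting the hypothesis ${\rm ker}\rho_\psi = {\rm Trans}({\mathbb T}^n)$.

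The main obstacle is simply the technical check that $R$ is a global (and not merely local) diffeomorphism, which is handled by insisting $\varepsilon$ be small enough that $R$ is a near-identity perturbation; all other steps reduce to straightforward calculus on ${\mathbb T}^n$ and the algebraic identity $m \cdot d = 0$.
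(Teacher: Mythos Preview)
Your proof is correct and takes a genuinely different, more elementary route than the paper. Both argue by contrapositive, extracting from the rational dependence of the components of $d=X_\psi$ a nonzero integer vector annihilating $d$. The paper then splits into two cases according to whether a \emph{minimal} dependence relation involves fewer than $n$ or all $n$ components: in the first case it writes down a unipotent element of ${\rm GL}(n,{\mathbb Z})$ fixing $X_\psi$, while in the second it builds a semiconjugacy to a flow with one zero frequency, identifies an invariant codimension-one subspace $U$, manufactures a bump-function vector field transverse to the $\mathbb{Z}^n$-translates of $U$, and takes the time-one map of the resulting flow. Your construction sidesteps all of this by using the integer relation $m\cdot d=0$ to produce the global first integral $g(\theta)=\sin(2\pi m\cdot\theta)$ and the near-identity perturbation $R(\theta)=\theta+\varepsilon g(\theta)v$; this handles both of the paper's cases uniformly and is much shorter. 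The only thing the paper's Case~1 buys is a \emph{linear} element of $\ker\rho_\psi\setminus{\rm Trans}({\mathbb T}^n)$, but the lemma does not require linearity, so nothing is lost. Your one technical obligation---that $R$ is a global diffeomorphism---is indeed handled by taking $\varepsilon$ small, since ${\rm Diff}({\mathbb T}^n)$ is $C^1$-open (equivalently, a degree-one local diffeomorphism of a closed manifold is a diffeomorphism).
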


\begin{proof} Suppose that $\psi$ is not irrational. Then the components of $X_\psi$ are not rationally independent. Up to a permutation of the coordinates $\theta_1,\dots,\theta_n$ on ${\mathbb T}^n$, i.e., a smooth conjugacy, it can be assumed the first $l$ components of $X_\psi$ are the smallest subset of the components that are linearly dependent  over ${\mathbb Q}$. Specifically, writing $X_\psi = [a_1,a_2,\dots,a_n]^T$, there is a least integer $l$ with $1\leq l\leq n$ such that
\[ k_1a_1+\cdot\cdot\cdot + k_la_l =0\]
with $k_i\in{\mathbb Z}\setminus\{0\}$ for all $i=1,\dots,l$. The existence of an $R\in{\rm ker}\rho_\psi\setminus{\rm Trans}({\mathbb T}^n)$ will be exhibited separately in the cases of $l<n$ and $l=n$.

Case $1\leq l\leq n-1$. The $R\in{\rm Diff}({\mathbb T}^n)$ induced by the ${\rm GL}(n,{\mathbb Z})$ matrix
\[ B  = \begin{bmatrix} 1 & 0 & \dots & 0 & 0 & 0 & \dots & 0 & 0 \\ 0 & 1 & \dots & 0 & 0 & 0 & \dots & 0 & 0 \\ \vdots & \vdots & \ddots & \vdots & \vdots & \vdots & \ddots & \vdots & \vdots \\ 0 & 0 & \dots & 1 & 0 & 0 & \dots & 0 & 0 \\ 0 & 0 & \dots & 0 & 1 & 0 & \dots & 0 & 0 \\ 0 & 0 & \dots & 0 & 0 & 1 & \dots & 0 & 0 \\ \vdots & \vdots & \ddots & \vdots & \vdots & \vdots & \ddots & \vdots & \vdots \\ 0 & 0 & \dots & 0 & 0 & 0 & \dots & 1 & 0 \\  k_1 & k_2 & \dots & k_{l-1} & k_l & 0 & \dots & 0 & 1\end{bmatrix}\]
satisfies $R_*X_\psi = X_\psi$. So $R\in {\rm ker}\rho_\psi$, but $R\not\in{\rm Trans}({\mathbb T}^n)$.

Case $l=n$. Here none of the $k_1,k_2,\dots,k_n$ are zero, and the integer-entry matrix
\[ B = \begin{bmatrix} 1 & 0 & \dots & 0 & 0 \\ 0 & 1 & \dots & 0 & 0 \\ \vdots & \vdots & \ddots & \vdots & \vdots \\ 0 & 0 & \dots & 1 & 0 \\ k_1 & k_2 & \dots & k_{n-1} & k_n\end{bmatrix}\]
induces a $k_n$-to-$1$ smooth surjection $g$ of ${\mathbb T}^n$ to itself. The flow $\phi$ on ${\mathbb T}^n$ generated by $X_\phi = [a_1,\dots,a_{n-1},0]^T$ is a factor of $\psi$ i.e., $g\psi_t = \phi_t g$ for all $t\in\mathbb R$, because $BX_\psi = X_\phi$.

The components $a_1,\dots,a_{n-1}$ of $X_\phi$ are rationally independent. For, if they were not, there would then be integers $k_1^\prime,\dots,k_{n-1}^\prime$, not all zero, such that $k_1^\prime a_1 + \cdot\cdot\cdot + k_{n-1}^\prime a_{n-1}=0$, which would contradict the minimality of $l=n$. With $e_1,\dots,e_n$ as the standard basis for ${\mathbb R}^n$ and $E={\rm Span}(e_1,\dots,e_{n-1})$ an $(n-1)$-dimensional subspace of ${\mathbb R}^n$, the closure of ${\mathcal O}_\phi(0)$ is $\pi(E)$. Thus $\overline{{\mathcal O}_\phi(0)}$ is an embedded submanifold of ${\mathbb T}^n$ diffeomorphic to ${\mathbb T}^{n-1}$.

The semiconjugacy $g$ between $\psi$ and $\phi$ along with $g(0)=0$ imply that $\overline{{\mathcal O}_\psi(0)}$ is an embedded submanifold of ${\mathbb T}^n$ that is diffeomorphic to ${\mathbb T}^{n-1}$. In particular, the $(n-1)$-dimensional subspace $U$ of ${\mathbb R}^n$ determined by $B(U) = E$ satisfies $\pi(U) = \overline{{\mathcal O}_\psi(0)}$, where $U={\rm Span}(u_1,u_2,\dots,u_{n-1})$ for $B(u_i) = e_i$, $i=1,\dots,n-1$. Furthermore, 
\[ X_\psi = a_1u_1 + \cdot\cdot\cdot + a_{n-1}u_{n-1} \in U\]
because $k_1a_1+\cdot\cdot\cdot + k_na_n = 0$.

For $\epsilon>0$, $V$ a nonempty subset of ${\mathbb R}^n$, and $m\in{\mathbb Z}^n$, define $N_\epsilon(V)$ to be the set of points in ${\mathbb R}^n$ less than a distance of $\epsilon$ from $V$, and define $V+m$ to be the translation of $V$ by $m$. By the definition of $E$, if $E+m\ne E$, then $N_{1/2}(E+m)\cap N_{1/2}(E)=\emptyset$. Since $B(U)=E$ and $B({\mathbb Z}^n)\subset {\mathbb Z}^n$, there exists $\epsilon>0$ such that if $U+m\ne U$, then $N_\epsilon(U+m)\cap N_\epsilon(U)=\emptyset$.

The vector $e_n\not \in U$. Let $x_1,\dots, x_n$ be the coordinates on ${\mathbb R}^n$ that correspond to the basis $u_1,\dots,u_{n-1},e_n$. Let $f:{\mathbb R}\to{\mathbb R}$ be a smooth bump function with $f(0)=1$ and whose support has length smaller than $\epsilon$. In terms of the coordinates $x_1,\dots, x_n$, define a smooth vector field on $N_\epsilon(U)$ by
\[ Y = f(x_n)\frac{\partial}{\partial x_n}.\]
Extend this vector field to all of ${\mathbb R}^n$ by translation to $N_\epsilon(U+m)$ for those $m\in{\mathbb Z}^m$ for which $U+m\ne U$, and to the remainder of ${\mathbb R}^n$ as $0$. The extended vector field is globally Lipschitz, and so determines a flow $\xi$ on ${\mathbb R}^n$.

Since the vector field generating $\xi$ is invariant under translations by $m\in{\mathbb Z}$, the time-one map $\xi_1$ is also invariant under these translations. Thus $\xi_1$ is a lift of an $R\in{\rm Diff}({\mathbb T}^n)$. In terms of the coordinates $x_1,\dots, x_n$, the derivative of $\xi_1$ at any point $x\in{\mathbb R}^n$ is of the form
\[ {\bf T}_x\xi_1 = \begin{bmatrix} 1 & 0 & \dots & 0 & 0 \\ 0 & 1 & \dots & 0 & 0 \\ \vdots & \vdots & \ddots & \vdots & \vdots \\ 0 & 0 & \dots & 1 & 0 \\ 0 & 0 & \dots & 0  & \ast \end{bmatrix},\]
and so ${\bf T}_x\xi _1(u) = u$ for $u\in U$. Since $X_\psi\in U$, this means that ${\bf T}_x \xi_1(X_\psi) =  X_\psi$. Since $X_\psi$ is a constant vector field, then $(\xi_1)_* X_\psi = X_\psi$. Since $\xi_1$ is a lift of $R$, it now follows that $R_*X_\psi = X_\psi$. Therefore $R\in {\rm ker}\rho_\psi$ but $R\not\in{\rm Trans}({\mathbb T}^n)$.
\end{proof}

The group ${\rm Aut}({\mathbb T}^n)$ of automorphisms of $\mathbb T^n$ is naturally identified with ${\rm GL}(n,{\mathbb Z})$. For $\mathcal T_c\in {\rm Trans}(\mathbb T^n)$ and $B\in {\rm GL}(n,\mathbb Z)$, the composition of $\mathcal T_c$ with $B$ is written $\mathcal T_c  B= B+c$. 

\begin{theorem}\label{characterization} Let $\psi$ be a flow on ${\mathbb T}^n$ with $X_\psi$ a nonzero constant vector. Then $\psi$ is irrational if and only if there exists a subgroup $H$ of ${\rm GL}(n,{\mathbb Z})$ isomorphic to $M_\psi$ such that $S_\psi = {\rm Trans}({\mathbb T}^n) \rtimes_\Gamma H$.
\end{theorem}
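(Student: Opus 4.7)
The plan for the forward direction is to show that irrationality of $\psi$ forces every element of $S_\psi$ to be affine and then to read off the semidirect product structure. Given $R \in S_\psi$, I take the lift decomposition $\hat R(x) = Bx + \hat U(x) + \hat c$ used in Lemma \ref{subset}, with $B \in {\rm GL}(n,{\mathbb Z})$, $\hat U$ continuous and ${\mathbb Z}^n$-periodic with $\hat U(0)=0$, and $\hat c \in [0,1)^n$. Lifting $R_\ast X_\psi = \rho_\psi(R) X_\psi$ gives $(B + D\hat U(x)) X_\psi = \rho_\psi(R) X_\psi$ for all $x \in {\mathbb R}^n$. Integrating the directional derivative of $\hat U$ along the $X_\psi$-direction and evaluating at $x = 0$ yields $\hat U(t X_\psi) = -t(B - \rho_\psi(R) I) X_\psi$ for all $t \in {\mathbb R}$. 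Boundedness of $\hat U$ forces both $(B - \rho_\psi(R) I) X_\psi = 0$ and $\hat U \equiv 0$ along the $\psi$-orbit through $0$; minimality of $\psi$ (an irrational flow is minimal) combined with continuity then propagates $\hat U \equiv 0$ to all of ${\mathbb T}^n$, so $R$ is affine.

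With the affine rigidity in place, I set $H = \{B \in {\rm GL}(n,{\mathbb Z}) : B X_\psi = \mu X_\psi \text{ for some } \mu \in {\mathbb R}^\times\}$, regarded as a subgroup of ${\rm Diff}({\mathbb T}^n)$ via its linear action. A multiplicativity check shows $H$ is a subgroup, each $R \in S_\psi$ admits a unique decomposition $R = {\mathcal T}_c B$ with $B \in H$, and ${\rm Trans}({\mathbb T}^n) \cap H = \{I\}$, so $S_\psi = {\rm Trans}({\mathbb T}^n) \cdot H$. A direct calculation gives $R\, {\mathcal T}_d R^{-1} = {\mathcal T}_{Bd}$ for $R = {\mathcal T}_c B$, so ${\rm Trans}({\mathbb T}^n)$ is normal in $S_\psi$, yielding $S_\psi = {\rm Trans}({\mathbb T}^n) \rtimes_\Gamma H$. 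The assignment $B \mapsto \mu$ defines a surjective homomorphism $H \to M_\psi$; it is injective because if $(B - I) X_\psi = 0$ with $B \neq I$, then any nonzero integer row vector of $B - I$ furnishes a nontrivial ${\mathbb Z}$-linear relation among the components of $X_\psi$, contradicting irrationality.

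For the converse, assume $S_\psi = {\rm Trans}({\mathbb T}^n) \rtimes_\Gamma H$ with $H \subset {\rm GL}(n,{\mathbb Z})$; every element of $S_\psi$ is then affine by construction. Suppose for contradiction $\psi$ is not irrational, so there exist integers $k_1,\ldots,k_n$, not all zero, with $k_1 a_1 + \cdots + k_n a_n = 0$, where $X_\psi = (a_1,\ldots,a_n)$. Pick a smooth $1$-periodic non-constant $g: {\mathbb R} \to {\mathbb R}$ (any such $g$ is automatically non-affine on ${\mathbb R}$ since any affine periodic function is constant), set $f(\theta) = g(k_1 \theta_1 + \cdots + k_n \theta_n)$, and note $X_\psi(f) = g'(\cdot)\sum_i k_i a_i = 0$. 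For a fixed nonzero $v \in {\mathbb R}^n$ and sufficiently small $\epsilon > 0$, the perturbation $R(\theta) = \theta + \epsilon f(\theta) v$ is a diffeomorphism of ${\mathbb T}^n$ (openness of ${\rm Diff}({\mathbb T}^n)$ in the $C^1$ topology), and ${\bf T}R \cdot X_\psi = X_\psi + \epsilon v\, X_\psi(f) = X_\psi$ gives $R \in S_\psi$ with $\rho_\psi(R) = 1$; since $f$ is non-affine, so is $R$, contradicting the semidirect product structure. Hence $\psi$ is irrational.

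The main obstacle is the lift-decomposition step of the forward direction: minimality of an irrational flow must be used twice---first, pitting boundedness of the periodic part $\hat U$ against the linear growth of $-t(B - \rho_\psi(R) I) X_\psi$ to force vanishing of $\hat U$ along one orbit, and second, using density of that orbit to propagate the vanishing to the whole torus. Once this affine rigidity is established, verification of the semidirect product structure and the injectivity of the induced map $H \to M_\psi$ via rational independence are routine, as is the converse construction of a non-affine generalized symmetry.
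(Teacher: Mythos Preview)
Your proof is correct, and both directions are handled soundly. The forward direction essentially reproduces, self-contained, the results the paper merely cites from \cite{BA2}: your lift-and-bound argument showing every $R\in S_\psi$ is affine is the same technique the paper uses in Lemma~\ref{subset} (boundedness of the periodic part kills the linear drift, then minimality propagates vanishing), and the remaining verification that $H\cong M_\psi$ via rational independence is what one expects.

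The converse is where you genuinely diverge from the paper. The paper's route is through Lemma~\ref{kernel}: assuming $\psi$ is not irrational, it produces an $R\in\ker\rho_\psi\setminus{\rm Trans}({\mathbb T}^n)$, splitting into two cases (for $l<n$ an explicit ${\rm GL}(n,{\mathbb Z})$ matrix; for $l=n$ a more elaborate bump-function construction along a codimension-one subspace), and then uses the isomorphism $H\cong M_\psi$ to force $B=I$ and reach a contradiction. Your construction is shorter and case-free: from any single integer relation $\sum k_ia_i=0$ you build a non-affine $R(\theta)=\theta+\epsilon g(k\cdot\theta)v$ with $R_\ast X_\psi=X_\psi$, which contradicts the semidirect product structure directly, without ever invoking the hypothesis $H\cong M_\psi$. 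This buys you a cleaner argument and shows that the weaker hypothesis $S_\psi\subset{\rm Trans}({\mathbb T}^n)\rtimes_\Gamma{\rm GL}(n,{\mathbb Z})$ already forces irrationality. The paper's Lemma~\ref{kernel}, on the other hand, yields the slightly sharper intermediate statement that $\ker\rho_\psi={\rm Trans}({\mathbb T}^n)$ characterizes irrationality among constant-vector-field flows.
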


\begin{proof} Suppose that $\psi$ is irrational. This implies (by Theorem 5.5 in \cite{BA2}) that
\[ S_\psi = {\rm ker}\rho_\psi \rtimes_\Gamma H_\psi,\]
where $H$ is a subgroup of ${\rm GL}(n,{\mathbb Z})$ isomorphic to $M_\psi$. Furthermore, the irrationality of $\psi$ implies (by Corollary 4.7 in \cite{BA2}) that ${\rm ker}\rho_\psi={\rm Trans}({\mathbb T}^n)$.

Now suppose that $\psi$ is not irrational. By Lemma \ref{kernel}, there is $R\in {\rm ker}\rho_\psi\setminus{\rm Trans}({\mathbb T}^n)$. If there were a subgroup $H$ of ${\rm GL}(n,{\mathbb Z})$ isomorphic to $M_\psi$ such that $S_\psi = {\rm Trans}(\mathbb T^n)\rtimes_\Gamma H$, then $R=\mathcal T_c B$ for some $c\in{\mathbb T}^n$ and $B\in H$. Hence $1=\rho_\psi(R) = \rho_\psi(\mathcal T_c)\rho_\psi(B)= \rho_\psi(B)$. However, since $H$ is isomorphic to $M_\psi$, there is only one element of $H$ which corresponds to the multiplicative identity $1$ of $\mathbb R^\times$, and that is $I$, the identity matrix. This means that $B=I$, and so $R=\mathcal T_c$, a contradiction.
\end{proof}

\section{Global Rigidity for Certain ${\mathbb Z}^d$ Anosov Actions}

Global rigidity is about when a ${\mathbb Z}^d$ Anosov action, which as is well-known is topologically conjugate to an algebraic ${\mathbb Z}^d$ action (see \cite{KK}), is smoothly conjugate to an algebraic ${\mathbb Z}^d$ action. A ${\mathbb Z}^d$ action on ${\mathbb T}^n$ is a monomorphism $\alpha:{\mathbb Z}^d\to{\rm Diff}({\mathbb T}^n)$. It is Anosov if there is $m\in{\mathbb Z}^d\setminus\{0\}$ with $\alpha(m)$ Anosov, is {\it algebraic}\, if $\alpha({\mathbb Z}^d)\subset {\rm GL}(n,{\mathbb Z})$, and more generally is {\it affine}\, if $\alpha({\mathbb Z}^d)\subset{\rm Trans}({\mathbb T}^n)\rtimes_\Gamma{\rm GL}(n,{\mathbb Z})$. Algebraic ${\mathbb Z}^d$-actions are found in algebraic number theory (see \cite{KKS} and \cite{Sc}). 

\vskip0.2cm
\noindent{\it Proof of Theorem \ref{Anosovalgebraic}.} Although well-known, the argument for the existence of a common fixed point of a finite index subgroup of a ${\mathbb Z}^d$ Anosov action is included for completeness. Let $m_0\in{\mathbb Z}^d$ be such that $\alpha(m_0)$ is Anosov. Then $\alpha(m_0)$ is topologically conjugate to a hyperbolic automorphism of ${\mathbb T}^n$ (see \cite{Ma}), and so $\alpha(m_0)$ has a finite number of fixed points, $f_1,\dots,f_l$. Let $\alpha(m_1),\dots,\alpha(m_d)$ be a generating set for $\alpha({\mathbb Z}^d)$. Since $\alpha({\mathbb Z}^d)$ is abelian, then for all $i=1,\dots,d$ and $j=1,\dots,l$,
\[ \alpha(m_0)\alpha(m_i)(f_j) = \alpha(m_i)\alpha(m_0)(f_j) = \alpha(m_i)(f_j).\]
This means that $\alpha(m_i)(f_j)$ is one of the finitely many fixed points of $\alpha(m_0)$. Since each $\alpha(m_i)$ is invertible, there is a positive finite integer $r_i$ such that $\alpha(m_i)^{r_i}(f_1) = f_1$. Thus the finite index subgroup of $\alpha({\mathbb Z}^d)$ generated by $\alpha(m_1)^{r_1}, \dots,\alpha(m_d)^{r_d}$ has $f_1$ as a common fixed point.

By hypothesis, there is a $g\in{\rm Diff}({\mathbb T}^n)$ and an irrational flow $\phi$ on ${\mathbb T}^n$ for which $X_\phi = g_* X_\Phi$. By Theorem \ref{characterization}, there is a subgroup $H_\phi$ of ${\rm GL}(n,\mathbb Z)$ isomorphic to $M_\phi$ such that $S_\phi={\rm Trans}({\mathbb T}^n)\rtimes_\Gamma H_\phi$. For $h\in{\rm Diff}({\mathbb T}^n)$ given by $h={\mathcal T}_{-g(f_1)}\circ g$ it follows that $h_*X_\Phi = X_\phi$ because $({\mathcal T}_c)_*X_\phi = X_\phi$ for any $c\in{\mathbb T}^n$. Then $\Phi$ and $\phi$ are projectively conjugate, and so, as mentioned in Section 2, $\Delta_h(S_\phi) = S_\Phi$. The inclusion $\alpha({\mathbb Z}^d)\subset S_\Phi$ implies that
\[ \Delta_{h^{-1}}(\alpha({\mathbb Z}^d))\subset {\rm Trans}({\mathbb T}^n)\rtimes_\Gamma H_\phi.\]
This means that $\alpha$ is $C^\infty$-conjugate to an affine ${\mathbb Z}^d$-action.

For each $\alpha(m)$ in the finite index subgroup generated by $\alpha(m_1)^{r_1},\dots,\alpha(m_d)^{r_d}$ there are $B_m\in H_\phi$ and $c_m\in {\mathbb T}^n$ such that $\Delta_{h^{-1}}(\alpha(m)) = B_m + c_m$. Since $h(f_1) = 0$, then
\[ c_m=(B_m+c_m)(0) = h\circ\alpha(m)\circ h^{-1}(0) = h \circ \alpha(m)(f_1) = h(f_1) = 0.\]
This means that $\Delta_{h^{-1}}(\alpha(m))\in H_\phi$. Hence the finite index subgroup of $\alpha({\mathbb Z}^d)$ generated by $\alpha(m_1)^{r_1},\dots,\alpha(m_d)^{r_d}$, which is isomorphic to ${\mathbb Z}^d$, is $C^\infty$-conjugate to an algebraic ${\mathbb Z}^d$-action. Since $H_\phi$ contains a ${\mathbb Z}^d$ subgroup, and $M_\phi$ is isomorphic to $H_\phi$, then $M_\phi$ contains a ${\mathbb Z}^d$ subgroup. By absolute invariance of the multiplier group under projective conjugacy, $M_\Phi$ contains a ${\mathbb Z}^d$ subgroup.  \hfill $\Box$

\vskip0.2cm
Any quasiperiodic flow on ${\mathbb T}^n$ whose generalized symmetry group contains a ${\mathbb Z}^d$ Anosov action must possess nontrivial generalized symmetries since its multiplier group contains a ${\mathbb Z}^d$ subgroup by Theorem \ref{Anosovalgebraic} (cf.\,Theorem \ref{multiplierAnosov}). This puts a necessary condition on the quasiperiodic flows to which Theorem \ref{Anosovalgebraic} does apply. The quasiperiodic flows of Koch type mentioned in the next section satisfy this necessary condition. However, there are quasiperiodic flows that do not, as is illustrated next.

\begin{example}\label{trivialquasi}{\rm On ${\mathbb T}^n$, let $\psi$ be the flow generated by
\[ X_\psi = \frac{\partial}{\partial\theta_1} + \pi\frac{\partial}{\partial\theta_2} + \cdot\cdot\cdot + \pi^{n-1} \frac{\partial}{\partial\theta_n}.\]
This flow is quasiperiodic, since if its frequencies $1,\pi,\dots,\pi^{n-1}$ were linearly dependent over ${\mathbb Q}$ then $\pi$ would be algebraic. Quasiperiodicity of $\psi$ implies that each $\mu\in M_\psi$ is a real algebraic integer of degree at most $n$, and moreover, $M_\psi\cap{\mathbb Q}=\{1,-1\}$ (see Corollary 4.4 in \cite{BA2}). Suppose $\mu\in M_\psi\setminus\{1,-1\}$. Then there is $R\in S_\psi$ such that $R_*X_\psi = \mu X_\psi$. Quasiperiodicity of $\psi$ implies that ${\bf T}R=B\in{\rm GL}(n,{\mathbb Z})$ (see Theorem 4.3 in \cite{BA2}). Then $R_*X_\psi = \mu X_\psi$ becomes $BX_\psi = \mu X_\psi$, and for $B=(b_{ij})$, it follows that
\[ \mu = b_{11} + b_{12}\pi + \cdot\cdot\cdot + b_{1n}\pi^{n-1}.\]
If $b_{12}=\cdot\cdot\cdot = b_{1n} = 0$, then $\mu = b_{11}\in{\mathbb Z}$. But $M_\psi\cap{\mathbb Q}=\{1,-1\}$, and so $\mu = \pm 1$. This contradiction means that one of $b_{12},\dots,b_{1n}$ is nonzero. Because each multiplier of $\psi$ is a real algebraic integer of degree at most $n$, there is a monic polynomial $l(z)$ in the polynomial ring ${\mathbb Z}[z]$ such that $l(\mu)=0$. But this implies that $\pi$ is a root of a polynomial in ${\mathbb Z}[z]$, making $\pi$ algebraic. This shows that $M_\psi=\{1,-1\}$, and so $\psi$ does not possess nontrivial generalized symmetries.
}\end{example}

\section{Classification of Certain Equilibrium-Free Flows}

Quasiperiodic flows of Koch type are algebraic in nature and provide foliations which are often preserved by a topologically irreducible ${\mathbb Z}^d$ Anosov action (see \cite{BA5}, \cite{BA6}, and \cite{KKS} for such examples). A flow on ${\mathbb T}^n$ is {\it quasiperiodic of Koch type}\, if it is projectively conjugate to an irrational flow whose frequencies form a ${\mathbb Q}$-basis for a real algebraic number field ${\mathbb F}$ of degree $n$ over ${\mathbb Q}$. For a quasiperiodic flow $\Phi$ of Koch type, the real algebraic number field ${\mathbb F}$ of degree $n$ associated to it is unique, and its multiplier group is a finite index subgroup of the group of units ${\mathfrak o}_{\mathbb F}^\times$ in the ring of integers of ${\mathbb F}$ (see Theorem 3.3 in \cite{BA5}).  By Dirichlet's Unit Theorem (see \cite{SD}), there is $d\geq 1$ such that ${\mathfrak o}_{\mathbb F}^\times$ is isomorphic to ${\mathbb Z}_2\oplus{\mathbb Z}^d$, and so every quasiperiodic flow of Koch type always possesses nontrivial generalized symmetries.

Topological irreducibility of a ${\mathbb Z}^d$ action $\alpha$ is a condition on the topological factors that $\alpha$ has. A ${\mathbb Z}^d$ action $\alpha^\prime$ on ${\mathbb T}^{n^\prime}$ is a {\it topological factor}\, of $\alpha$ if there is a continuous surjection $h:{\mathbb T}^n\to{\mathbb T}^{n^\prime}$ such that $h\circ \alpha = \alpha^\prime\circ h$. A topological factor $\alpha^\prime$ of $\alpha$ is {\it finite}\, if the continuous surjection $h$ is finite-to-one everywhere. A ${\mathbb Z}^d$ action $\alpha$ is {\it topologically irreducible}\, if every topological factor $\alpha^\prime$ of $\alpha$ is finite.

For an algebraic ${\mathbb Z}^d$ action $\alpha$, there is a stronger sense of irreducibility, one that uses the group structure of ${\mathbb T}^n$. An algebraic ${\mathbb Z}^d$ action $\alpha^\prime$ on ${\mathbb T}^{n^\prime}$ is an {\it algebraic factor}\, of $\alpha$ if there is a continuous homomorphism $h:{\mathbb T}^n\to{\mathbb T}^{n^\prime}$ such that $h\circ \alpha = \alpha^\prime\circ h$. An algebraic factor $\alpha^\prime$ of $\alpha$ is {\it finite} if the continuous homomorphism $h$ is finite-to-one everywhere. An algebraic ${\mathbb Z}^d$ action $\alpha$ is {\it algebraically irreducible}\, if every algebraic factor $\alpha^\prime$ of $\alpha$ is finite. Algebraic irreducibility of a higher rank algebraic $\mathbb Z^d$ action $\alpha$ is equivalent to there being an $m\in \mathbb Z^d$ such that $\alpha(m)$ has an irreducible characteristic polynomial (see Proposition 3.1 on p.\,726 in \cite{KKS}; cf.\,\cite{BE}).

\vskip0.2cm
\noindent{\it Proof of Theorem \ref{flowKoch}.} Identify ${\bf T}{\mathbb T}^n$ with ${\mathbb T}^n\times{\mathbb R}^n$, and place on the fiber the standard Euclidean norm $\Vert\cdot\Vert$. By the hypotheses, there is $h\in{\rm Diff}({\mathbb T}^n)$ and a hyperbolic $B\in{\rm GL}(n,{\mathbb Z})$ such that $\Delta_{h^{-1}}(\alpha(m_0)) = B$. Every point of ${\mathbb T}^n$ is Lyapunov regular for $B$. The Oseledets decomposition associated with $\chi_B$ is
\[ {\bf T}_\theta {\mathbb T}^n = \bigoplus_{i=1}^k E^i_B,\]
where $E_B^i$, $i=1,\dots,k$, are the invariant subspaces of $B$ which are independent of $\theta$. Since $\Delta_{h^{-1}}(\alpha(m_0))=B$, every point of ${\mathbb T}^n$ is Lyapunov regular for $\alpha(m_0)$. Set
\[ E^i_{\alpha(m_0)}(\theta) = {\bf T}_{h(\theta)}h^{-1}\big( E^i_B), \ \ \theta\in{\mathbb T}^n.\]
The Oseledets decomposition associated with $\chi_{\alpha(m_0)}$ is then
\[ {\bf T}_\theta {\mathbb T}^n = \bigoplus_{i=1}^k E^i_{\alpha(m_0)}(\theta).\]

For $\mu = \rho_\Phi(\alpha(m_0))$, the hypothesis that the multiplicity of the value $\log\vert \mu \vert$ of $\chi_{\alpha(m_0)}$ is one at a point $\hat \theta\in{\mathbb T}^n$ implies that there is $1\leq l\leq k$ such that ${\rm dim}\big(E^l_{\alpha(m_0)}(\hat\theta)\big) = 1$ and
\[ \chi_{\alpha(m_0)}(\hat\theta,v) = \log\vert \mu \vert {\rm\ for\ } v\in E^l_{\alpha(m_0)}(\hat\theta)\setminus\{0\}.\]
By the definition $E^l_{\alpha(m_0)}(\hat\theta) = {\bf T}_{h(\hat\theta)}h^{-1}\big( E^l_B)$, it follows that ${\rm dim}\big( E^l_B\big) = 1$. Furthermore, since $\Delta_{h^{-1}}(\alpha(m_0))=B$ and $\chi_B$ is independent of $\theta$, it follows that $\chi_B(\theta,v) = \log\vert\mu\vert$ for all $v\in E^l_B\setminus\{0\}$ and for all $\theta\in{\mathbb T}^n$. The definition $E^l_{\alpha(m_0)}(\theta) = {\bf T}_{h(\theta)}h^{-1}\big( E^l_B)$ and the independence of $E^l_B$ from $\theta$ imply for all $\theta\in{\mathbb T}^n$ that ${\rm dim} \big(E^l_{\alpha(m_0)}(\theta)\big) = 1$ and $\chi_{\alpha(m_0)}(\theta,v) = \log\vert\mu\vert$ for all $v\in E^l_{\alpha(m_0)}(\theta)\setminus\{0\}$. Hence, the multiplicity of $\log\vert\mu\vert$ for $\chi_{\alpha(m_0)}$ is one for all $\theta\in{\mathbb T}^n$.

By Theorem \ref{Lyapunov}, the $\alpha(m_0)$-invariant one-dimensional distribution $E$ given by $E(\theta) = {\rm Span}(X_\Phi(\theta))$ satisfies $\chi_{\alpha(m_0)}(\theta,X_\Phi(\theta))=\log\vert\mu\vert$ for all $\theta\in{\mathbb T}^n$. If $E(\theta)\ne E^l_{\alpha(m_0)}(\theta)$ at some $\theta\in{\mathbb T}^n$, then $E(\theta)+E^l_{\alpha(m_0)}(\theta)$ is a two-dimensional subspace of  ${\bf T}_\theta {\mathbb T}^n$ for which $\chi_{\alpha(m_0)}(\theta, v) = \log\vert\mu\vert$ for all $v\in \big(E(\theta)+ E^l_{\alpha(m_0)}(\theta)\big)\setminus\{0\}$. This contradicts the multiplicity of $\log\vert\mu\vert$ for $\chi_{\alpha(m_0)}$ being one at every $\theta$. Thus $E^l_{\alpha(m_0)}(\theta) = E(\theta) = {\rm Span}(X_\Phi(\theta))$ for all $\theta\in{\mathbb T}^n$.

The vector field $h_*X_\Phi$ satisfies $h_*X_\Phi(\theta)\in E^l_B$ for all $\theta\in{\mathbb T}^n$ because ${\rm Span}(X_\Phi(\theta)) = {\bf T}_{h(\theta)} h^{-1}(E^l_B)$ for all $\theta\in{\mathbb T}^n$. Let $\psi$ be the flow for which $X_\psi = h_*X_\Phi$.  Since $E^l_B$ is a one-dimensional invariant subspace of $B$ and $\chi_B(\theta,v) = \log\vert\mu\vert$ for all $v\in E^l_B\setminus\{0\}$, it follows for all $\theta\in{\mathbb T}^n$ that
\[ \Vert B^kX_\psi(\theta)\Vert = \vert \mu\vert^k  \Vert X_\psi(\theta)\Vert {\rm \ for\ all\ }k\in{\mathbb Z}.\]

Hyperbolicity of $B$ implies that there is $\bar\theta\in{\mathbb T}^n$ such that ${\mathcal O}_B(\bar\theta)=\{ B^k(\bar\theta):k\in{\mathbb Z}\}$ is dense in ${\mathbb T}^n$. Since $\alpha(m_0)_*X_\Phi = \mu X_\Phi$ and $X_\psi = h_*X_\Phi$, the matrix $B$ satisfies $BX_\psi =\mu X_\psi B$. Then $B^k X_\psi = \mu^k X_\psi B^k$, and so $X_\psi B^k = \mu^{-k} B^k X_\psi$. Thus,
\[ \Vert X_\psi(B^k\bar\theta)\Vert = \vert \mu\vert^{-k} \Vert B^k X_\psi(\bar\theta)\Vert = \vert\mu\vert^{-k}\vert\mu\vert^k \Vert X_\psi(\bar\theta)\Vert = \Vert X_\psi(\bar\theta)\Vert {\rm \ for\ all\ }k\in{\mathbb Z}.\]
Denseness of ${\mathcal O}_B(\bar\theta)$ and continuity of $X_\psi$ imply that $\Vert X_\psi(\theta)\Vert = \Vert X_\psi(\bar\theta)\Vert$ for all $\theta\in{\mathbb T}^n$. The one-dimensionality of $E^l_B$ to which $X_\psi$ belongs implies that $X_\psi$ is a constant vector. Thus $X_\psi$ is an eigenvector of $B$.

The assumed topological irreducibility of $\alpha$ and the inclusion $\Delta_{h^{-1}}(\alpha({\mathbb Z}^d))\subset {\rm GL}(n,{\mathbb Z})$ imply that $\Delta_{h^{-1}}(\alpha(\mathbb Z^d))$ is algebraically irreducible. Thus there is $B^\prime\in \Delta_{h^{-1}}(\alpha({\mathbb Z}^d))$ with an irreducible characteristic polynomial. Since $BB^\prime = B^\prime B$ and $B^\prime$ has an irreducible characteristic polynomial, the eigenvector $X_\psi$ of $B$ is an eigenvector of $B^\prime$ too. Then there is $\vartheta\in{\mathbb R}^\times$ such that the components of $\vartheta^{-1}X_\psi$ form a ${\mathbb Q}$-basis for an algebraic number field of degree $n$ over ${\mathbb Q}$ (see Propositions 1 and  8 in \cite{WA}). Thus the flow $\phi$ determined by $X_\phi = \vartheta^{-1}X_\psi$ is irrational of Koch type for which $h_*X_\Phi = X_\psi = \vartheta X_\phi$. \hfill $\Box$


\begin{thebibliography}{AA}

\bibitem{AP} R.L. Adler and R. Palais, {\it Homeomorphic Conjugacy of Automorphisms on the Torus}, Proc. Amer. Math. Soc., Vol. 16, No. 6 (1965), 1222-1225.

\bibitem{BA2} L.F. Bakker, {\it Structure of Group Invariants of a Quasiperiodic Flow}, Electron. J. of Differential Equations, Vol. 2004 (2004), No. 39, 1-14.

\bibitem{BA5} L.F. Bakker, {\it Rigidity of Projective Conjugacy of Quasiperiodic Flows of Koch Type}, Colloquium Mathematicum, Vol. 112, No. 2 (2008), 291-312.

\bibitem{BA6} L.F. Bakker, {\it Measurably Nonconjugate Higher-Rank Abelian Non-Cartan Actions}, to appear in Proceedings of the 5th International Conference of Dynamic Systems and Applications.

\bibitem{BC} L.F. Bakker and G. Conner,  {\it A Class of Generalized Symmetries of Smooth Flows}, Communications on Pure and Applied Analysis, Vol. 3, No. 2 (2004), 183-195.

\bibitem{BP} L. Barreira and Y.B. Pesin, {\it Lyapunov Exponents and Smooth Ergodic Theory}, University Lecture Series, Vol. 23, American Mathematical Society, 2002.

\bibitem{BA} W. Basener, {\it Geometry of minimal flows}, Topology and its Applications, 153 (2006), 3627-3632.

\bibitem{BE} D. Berend, {\it Multi-Invariant Sets on Tori}, Trans. Amer. Math. Soc., Vol. 280, No. 2 (1983), 509-532.

\bibitem{FA} B.R. Fayad, {\it Analytic mixing reparameterizations of irrational flows}, Ergod. Th. \& Dynam. Sys., 22 (2002), 437-468.

\bibitem{GO} A. Gorodnik, {\it Open problems in dynamics and related fields}, J. of Mod. Dyn., Vol. 1, No. 1 (2007), 1-35.

\bibitem{KK} B. Kalinin and A. Katok, {\it Measure rigidity beyond uniform hyperbolicity: Invariant measures for Cartan actions on tori}, J. of Mod. Dyn., Vol. 1, No. 1 (2007), 123-146.

\bibitem{KH} A. Katok and B. Hasselblatt, ``Introduction to the Modern Theory of Dynamical Systems'', Encyclopedia of Mathematics and its Applications, Vol. 54, Cambridge University Press, 1995.

\bibitem{KKS} A. Katok, S. Katok, and K. Schmidt, {\it Rigidity of measurable structures for ${\mathbb Z}^d$-actions by automorphims of a torus,} Comment. Math. Helv. 77 (2002), 718-745.

\bibitem{KL1} A. Katok and J.W. Lewis, {\it Local rigidity for certain groups of toral automorphisms}, Israel. J. Math 75 (1991), 203-241.

\bibitem{KL2} A. Katok and J.W. Lewis, {\it Global rigidity results for lattice actions on tori and new examples of volume-preserving actions}, Israel. J. Math. 93 (1996), 253-280.
\
\bibitem{KS} B. Kalinin and R. Spatzier, {\it On the classification of Cartan Actions}, Geom. Funct. Anal. Vol. 17, No. 2 (2007), 468-490.

\bibitem{KO} H. Koch, {\it A renormalization group for Hamiltonians, with applications to KAM tori,} Ergod. Th. \& Dynam. Sys. 19 (1999), 475-521.

\bibitem{LD} J. Lopes Dias, {\it Renormalization of flows on the multidimensional torus close to a KT frequency vector}, Nonlinearity 15 (2002), 647-664.

\bibitem{Ma} A. Manning, {\it There are no new Anosov diffeomorphisms on tori}, Amer. J. of Math., Vol. 96, No. 3  (1974), 422-429.

\bibitem{PY} J. Palis and J.C. Yoccoz, {\it Centralizers of Anosov Diffeomorphisms on tori}, Annales scientifiques de l'\'E.N.S. $4^{e}$ s\'erie, tome 22, $n^o$ 1 (1989), 99-108.

\bibitem{RH} F. Rodriguez Hertz, {\it Global Rigidity of Certain Abelian Actions by Toral Automorphisms}, J. Mod. Dyn., Vol. 1, No. 3 (2007), 425-442.

\bibitem{Sa} P.R. Sad, {\it Centralizers of Vector Fields}, Topology Vol. 18 (1979), 97-114.

\bibitem{Sc} K. Schmidt, ``Dynamical Systems of Algebraic Origin,'' Progress in Mathematics Vol. 128, Birkh\"auser,1995.

\bibitem{SD} H.P.F. Swinnerton-Dyer, {\it A Brief Guide to Algebraic Number Theory},  Cambridge University Press. 2001 Cambridge, 2001.

\bibitem{WA} D.I. Wallace, {\it Conjugacy Classes of Hyperbolic Matrices in ${\rm SL}(n,{\mathbb Z})$ and Ideal Classes in an Order}, Trans. Amer. Math. Soc., Vol. 283, No. 1 (1984), 177-184.

\bibitem{WI} F. W. Wilson, Jr., {\it On the minimal sets of non-singular vector fields}, Ann. of Math. 2nd Ser., Vol. 84, No. 3 (1966), 529-536.

\end{thebibliography}
\end{document}